\def\diag{{\rm diag}}
\def\1{\mathds 1}
\def\hat{\widehat}
\def\Ex{{\mathbf E}}
\def\prob{{\mathbf P}}
\def\Var{{\mathbf V}}
\def\trace{\text{\rm trace}}
\def\bu{{\boldsymbol u}}
\def\bv{{\boldsymbol v}}
\def\bfone{{\mathbf 1}}
\def\bfc{{\mathbf c}}
\def\bfB{{\mathbf B}}
\def\bfM{{\mathbf M}}
\def\bfD{{\mathbf D}}
\def\bfY{{\mathbf Y}}
\def\bfX{{\mathbf X}}
\def\bfI{{\mathbf I}}
\def\bfS{{\mathbf S}}
\def\bfW{{\mathbf W}}
\def\bfA{{\mathbf A}}
\def\bfSigma{\mathbf\Sigma}
\def\bfXi{\mathbf\Xi}
\def\bfDelta{\mathbf\Delta}
\def\bfOmega{\mathbf\Omega}
\def\bbeta{\boldsymbol\beta}
\def\bmu{\boldsymbol\mu}
\def\bphi{\boldsymbol\phi}
\def\bxi{\boldsymbol\xi}
\newcommand{\field}[1]{\mathbb{#1}}
\newcommand{\R}{\field{R}}
\def\phiRV{\widehat{\phi}^{{\,\rm RV}}}
\def\bphiRV{\widehat{\bphi}{}^{{\,\rm RV}}}
\def\phiRML{\hat\phi^{\,\rm RML}}
\def\bphiRML{\hat\bphi{}^{\,\rm RML}}
\def\phiML{\hat\phi^{\,\rm SML}}
\def\bphiML{\hat\bphi{}^{\,\rm SML}}
\def\bphiPML{\hat\bphi{}^{\,\rm PML}}
\DeclareMathOperator*{\argmin}{arg\,min}
\theoremstyle{plain}
\newtheorem{prop}{Proposition}    
\begin{document}


\begin{frontmatter}
\title{On estimation of the diagonal elements of a sparse precision matrix
}
\runtitle{Estimation of the diagonal elements of a sparse matrix}

\begin{aug}
\author{Samuel Balmand \ead[label=e1]{samuel.balmand@ensg.eu}}
\and
\author{Arnak S. Dalalyan\ead[label=e2]{arnak.dalalyan@ensae.fr}}

\address{Universit\'e Paris-Est, IGN, MATIS, ENSG, F-77455, Marne-la-Vall\'ee, France \\ \printead{e1}}
\address{ENSAE ParisTech \& CREST, 3 av. P. Larousse, 92245 Malakoff, France\\ \printead{e2}}

\runauthor{Balmand and Dalalyan}

\end{aug}

\begin{abstract}
In this paper, we present several estimators of the diagonal elements of the inverse of the covariance matrix, called precision matrix,
of a sample of independent and identically distributed random vectors. The main focus is on the case of high dimensional vectors having a sparse
precision matrix. It is now well understood that when the underlying distribution is Gaussian, the columns of the precision matrix can be
estimated independently form one another by solving linear regression problems under sparsity constraints. This approach leads to a computationally
efficient strategy for estimating the precision matrix that starts by estimating the regression vectors, then estimates the diagonal entries
of the precision matrix and, in a final step, combines these estimators for getting estimators of the off-diagonal entries. While the step of
estimating the regression vector has been intensively studied over the past decade, the problem of deriving statistically accurate estimators
of the diagonal entries has received much less attention. The goal of the present paper is to fill this gap by presenting four estimators---that
seem the most natural ones---of the diagonal entries of the precision matrix and then performing a comprehensive empirical
evaluation of these estimators.  The estimators under consideration are the residual variance, the relaxed maximum likelihood, the symmetry-enforced
maximum likelihood and the penalized maximum likelihood. We show, both theoretically and empirically, that when the aforementioned regression vectors
are estimated without error, the symmetry-enforced maximum likelihood estimator has the smallest estimation error. However, in a more realistic
setting when the regression vector is estimated by a sparsity-favoring computationally efficient method, the qualities of the estimators become
relatively comparable with a slight advantage for the residual variance estimator.
\end{abstract}

\begin{keyword}[class=MSC]
\kwd[Primary ]{62H12}
\end{keyword}

\begin{keyword}
\kwd{Precision matrix}
\kwd{sparse recovery}
\kwd{penalized likelihood}
\end{keyword}


\end{frontmatter}

\section{Introduction}
\label{sect:introduction}

We consider the problem of precision matrix estimation that has been extensively studied in recent years partly because of its tight relation
with the graphical models. More precisely, assuming that we observe $p$ features on $n$ individuals, an interesting object
to display is the graph of associations between the features, especially when the number of features is large. The associations may be of different
type: linear correlations, partial correlations, measures of independence and so on. A measure of association between the features, which is
particularly relevant for Gaussian \citep{Lauritzen} and, more generally, non paranormal distributions \citep{Liu_2009,Lafferty_2012} is the
partial correlation. This leads to a Gaussian graphical model in which two nodes are connected by an edge if the partial correlation between
the features corresponding to these two nodes is nonzero, which is equivalent to the nonzeroness of the corresponding entry of the precision matrix \citep[Proposition 5.2]{Lauritzen}. The graph constructed in such a way relies on the population precision matrix, which is not
available in practice. Therefore, an important statistical problem is to infer this graph from $n$ iid observations of the $p$-dimensional
feature-vector. In view of the aforementioned connection with the precision matrix, the estimated graph may be deduced from the estimated
precision matrix by comparing its entries with a suitably chosen threshold.

Another important problem for which the precision matrix estimation is relevant\footnote{In the case of linear
discriminant analysis for binary classification, a simpler approach consisting in replacing the sparsity of the precision
matrix by the sparsity of the product of the latter with the difference of the class means has been proposed and studied
by \cite{Cai_jasa11}.} is the linear \citep{Fisher36}
or quadratic discriminant analysis \citep{Anderson}. Indeed, the decision boundary in the binary or
multi-class  classification problem---under the assumption that the conditional distributions of the features
given the class are Gaussian---is defined in terms of the precision matrix. In order to infer this decision
boundary from data, it is therefore relevant to start with estimating the precision matrix. The simplest way
of estimating the latter is by inverting the sample covariance matrix or, if the inverse does not exist, by
computing the pseudo-inverse of the sample covariance matrix. However, when the dimension $p$ is such that the
number of unknown parameters $p(p+1)$ is comparable to or larger than the sample-size $n$, the (pseudo-)inversion
of the sample covariance matrix leads to very poor results. To circumvent this shortcoming, additional assumptions
on the precision matrix should be imposed which should preferably be realistic, interpretable and lead to
statistically and computationally efficient estimation procedures. The sparsity of the precision matrix offers
a convenient setting in which these criteria are met.


To present in a more concrete fashion the content of the present work, let $\bfX$ be a $n\times p$ random matrix
representing the values of $p$ variables observed on $n$ individuals. Assume that the rows of the matrix $\bfX$
are independent and Gaussian with mean $\bmu^*$ and covariance matrix $\bfSigma^*$. The inverse of $\bfSigma^*$,
called the precision matrix and  denoted by $\bfOmega^* = (\omega^*_{ij})$, is an object of central interest
since---as mentioned earlier---it encodes the conditional dependencies between pairs of variables given the values
of all the other variables. Based on the precision matrix, the graph $\mathscr G^*$ of relationships between the
$p$ variables is constructed as follows: each node of the graph represents a variable and two nodes $i$ and $j$
are connected by an edge if and only if $\omega^*_{ij}\not=0$. Estimating this graph from a sample of size $n$
represented by the rows of $\bfX$ is a challenging statistical problem that has attracted a lot of attention in
the past decade. In a frequently encountered situation of the dimension $p$ comparable to or even larger than $n$,
a commonly used assumption is the sparsity of the graph $\mathscr G^*$. Namely, it is assumed that the maximal
degree of the nodes of $\mathscr G^*$ is much smaller than $p$ (see, for instance, \cite{MeinshausenBuhl,Yuan2007}
for early references).

Most approaches of estimating sparse precision matrices that gained popularity in recent years rely on weighted $\ell_1$-penalization of the off-diagonal elements of the
precision matrix; recent contributions on the statistical aspects of this approach can be found in \cite{Yuan,CaiLiuLuo,SunZhang13,Cai2015}
and the references therein. The rationale behind this approach is that the weighted $\ell_1$-penalty can be viewed as a convexified version
of the $\ell_0$-penalty, the latter being understood as the number of nonzero elements. The convexity of the penalty in conjunction with the convexity of the data fidelity term leads to estimators that can be efficiently computed by convex programming
\citep{FriedmanHT,BanerjeeGAspr}.

To further improve the computational complexity, it is possible to split the problem of estimating $p^2$ entries of the precision matrix into $p$ independent problems of estimating the $p$-dimensional columns of it \citep{MeinshausenBuhl}. To this end, the matrix $\bfOmega^*$ is written as $\bfB^*\bfD^*$, where $\bfD^*$
is a diagonal matrix while $\bfB^*$ is a $p\times p$ matrix with all diagonal entries equal to one.
Each columns of the matrix $\bfB^*$ can be estimated by regressing one column of the data matrix $\bfX$ on all the remaining columns. In the context of high dimensionality and sparse precision matrix, this can be performed by sparsity favoring methods \citep{BuhlmanvdGeerBook} such as the Lasso \citep{tibshirani96}, the Dantzig selector \citep{CandesTao}, the square-root Lasso \citep{BelloniChernozhukovWang}, etc. A crucial observation at this stage is that the sparsity patterns, \textit{i.e.}, the locations of nonzero entries, of the matrices $\bfB^*$ and $\bfOmega^*$ coincide. In particular, the degree of the $j$-th node in the graph $\mathscr G^*$ is equal to the number of nonzero entries of the $j$-th column of $\bfB^*$, for every $j=1,\ldots,p$.

Once the columns of $\bfB^*$ successfully estimated, one needs to estimate the diagonal matrix $\bfD^*$, the diagonal entries of which coincide with those of the precision matrix $\bfOmega^*$. This step is necessary for recovering the precision matrix (both diagonal and nondiagonal entries) but it is also important for constructing the graph\footnote{We put an emphasize on this last point since we did not find it in the literature.} of conditional dependencies. Of course, the latter can be estimated by thresholding the entries of the estimator of $\bfB^*$ without resorting to an estimator of $\bfD^*$, but the choice of the threshold is in this case a difficult issue deprived of clear statistical interpretation. In contrast with this, if along with an estimator of $\bfB^*$, an estimator of $\bfD^*$ is available, then one may straightforwardly estimate the partial correlations and threshold them to infer the graph of conditional dependencies. In this case, the threshold has a more clear statistical
meaning since the partial correlations are in absolute value bounded by one.

\begin{figure*}
\centering
\includegraphics[width=\textwidth]{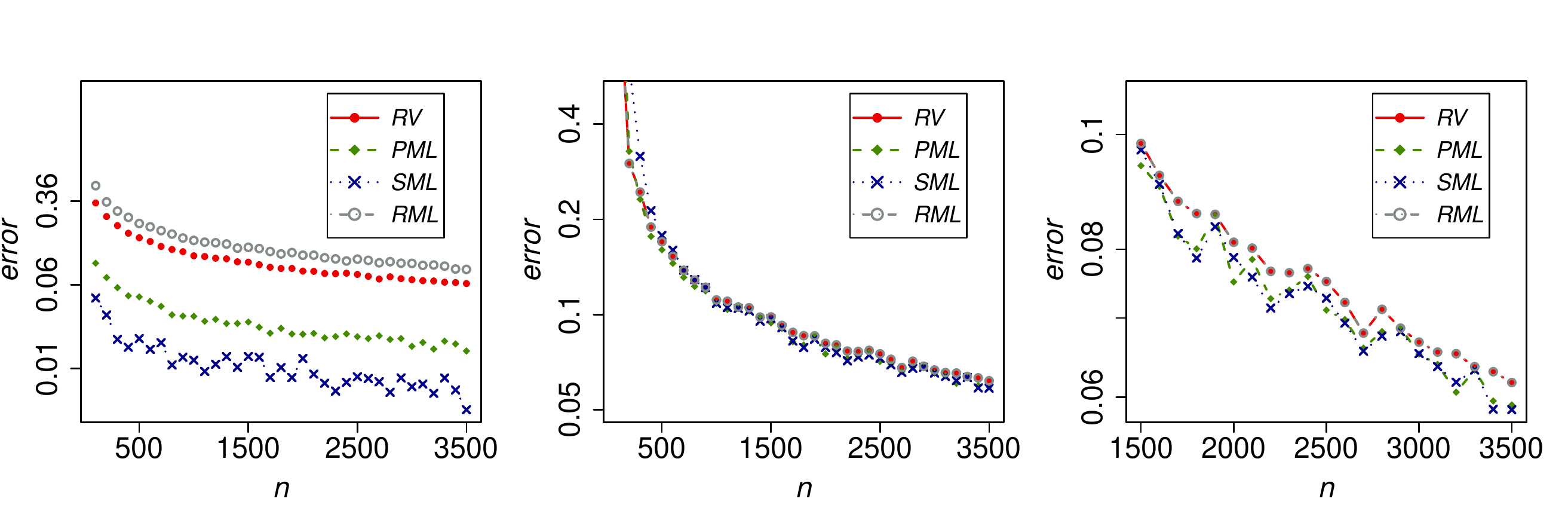}
\hbox to \hsize{\hfill\hfill\hfill $\hat\bfB$ coincides with $\bfB^*$ \hfill\hfill\hfill $\hat\bfB$ estimated by SqRL$+$OLS
\hfill\hfill $\hat\bfB$ estimated by SqRL$+$OLS: zoom}
\caption{The average $\ell_2$-error (computed from 50 independent trials)  of the four  estimators considered in this work as a function
of the sample size. The plots concern Model 2 described in Section~\ref{ssec:exp} and dimension $p=60$. One can observe, in particular, that when $\bfB^*$ is estimated without error (left panel), the estimators SML and PML improve on the residual variance and relaxed maximum likelihood estimators.}
\label{fig:globfig}
\end{figure*}

It follows from the above discussion that the problem of estimating the matrix $\bfD^*$ built from the diagonal entries of the precision matrix is an important ingredient of the estimation of the precision matrix and the graph of conditional dependencies between the features. The purpose of the present work is to propose several natural estimators of $\bfD^*$ and to study their statistical properties, essentially from an empirical point of view. Combining standard arguments, we present four estimators, termed residual variance (RV), relaxed maximum likelihood (RML), symmetry-enforced maximum likelihood (SML) and penalized maximum likelihood (PML). The first one, residual variance, is the most commonly used estimator when the matrix $\bfB^*$ is estimated column-wise by a sparse linear regression approach briefly mentioned in the foregoing discussion. The other three methods considered in this paper are based on the principle of likelihood maximization under various approaches for handling the prior
information. In order to give the reader a foretaste of the content of next sections, we present in Figure~\ref{fig:globfig} the accuracy of the four methods of estimating the diagonal elements of the precision matrix on a synthetic data-set. More details are given in Section~\ref{ssec:exp}.

\section{Notation and preliminaries on precision matrix estimation}

This section introduces notation used throughout the paper and presents some preliminary material on sparse precision matrix estimation.

\subsection{Notation}

For an unknown parameter $\theta$ we note $\theta^*$ its true value.
As usual, $\mathcal N_p(\bmu^*,\bfSigma^*)$ is the Gaussian distribution in $\R^p$ with mean $\bmu^*$ and covariance matrix $\bfSigma^*$.
The expectation of a random vector $\bfX$ is denoted by $\Ex(\bfX)$ and its covariance matrix by $\Var(\bfX)$.
We denote by $\bfone_n$ the vector from $\R^n$ with all the entries equal to $1$ and by
$\bfI_n$ the $n\times n$ identity matrix.  We write $\mathds{1}$ for the indicator function,
which is equal to 1 if the considered condition is satisfied and 0 otherwise.
The smallest integer greater than or equal to $x \in \R$ is denoted by $\lceil x \rceil$.
In what follows, $[p] :=\{1,\ldots,p\}$ is the set of positive integers from $1$ to $p$.
For $i \in [p]$, the complement of the singleton $\{i\}$ in $[p]$ is denoted by $i^c$. For a vector $\bv\in\R^p$, $\bfD_\bv$ stands for the $p\times p$ diagonal matrix satisfying $(\bfD_\bv)_j = \bv_j$
for every \(j\in[p]\).
The matrix build keeping only the diagonal of a square matrix \(\bfM\) is denoted by \(\diag(\bfM)\).

The transpose of the matrix $\bfM$ is denoted by $\bfM^\top$. If this matrix is square, we note $|\bfM|$ its determinant.
For a $n\times p$ matrix $\bfM$, the vector of the elements of the $k$th row (resp.\ the $j$th column) whose indices are given
by the subset $J$ of $[p]$ (resp.\ $K$ of $[n]$) is denoted by $\bfM_{k,J}$ (resp.\ $\bfM_{K,j}$). In particular,
the vector made of all the elements of the $j$th column of the matrix $\bfM$ at the exception of the element of the
$k$th row is given by $\bfM_{k^c,j}$. Moreover, the whole $k$th row (resp.\ $j$th column) of $\bfM$ is denoted by
$\bfM_{k,\bullet}$ (resp.\ $\bfM_{\bullet,j}$).
We use the following notation for the (pseudo-)norms of matrices: if $q_1,q_2>0$, then
\begin{equation*}
{\|\bfM\|}_{q_1,q_2} =
\left\{\sum_{i=1}^n \|\bfM_{i,\bullet}\|_{q_1}^{q_2}\right\}^{1/q_2} .
\end{equation*}
With this notation, $\|\bfM\|_{2,2}$ and $\|\bfM\|_{1,1}$ are the Frobenius and the element-wise $\ell_1$-norm of $\bfM$,
respectively. The sample covariance matrix of the data points $\{\bfX_{k,\bullet}\}_{k\in[n]}$ is defined by
\begin{equation*}
\bfS_n= \frac1{n} \sum_{k=1}^n (\bfX_{k,\bullet}^\top - \hat\bmu) (\bfX_{k,\bullet}^\top - \hat\bmu)^\top = \frac1{n} (\bfX - \bfone_n \hat\bmu^\top)^\top (\bfX - \bfone_n \hat\bmu^\top) ,
\end{equation*}
where $\hat\bmu$ is either the sample mean  $\frac1{n} (\bfone_n^\top \bfX)^\top$ (when the mean $\bmu^*$ is unknown) or
the theoretical mean $\bmu^*$ (when it is considered as known).

\subsection{Preliminaries}

Throughout the paper we will present estimators of the diagonal elements of the precision matrix in the case of a general multidimensional Gaussian distribution, but in all theoretical developments we will assume that the marginals of $\bfX$ are standard Gaussian distributions,
\textit{i.e.}, $\bmu^*=0$ and $\bfSigma_{jj}^*=1$ for every $j\in[p]$. This assumption is reasonable, since we are concerned with
the problems in which the sample size is large enough to consistently estimate the individual  means and the individual variances
of the variables. So, one can always center the variables by the sample mean and divide by the sample standard deviation to get
close to the assumption\footnote{Unless expressly
stated otherwise, in the whole article, $1 \leq i,j \leq p$ and $1 \leq k \leq n$.} that random variables $\bfX_{1,j},\ldots,\bfX_{n,j}$
are i.i.d.\ $\mathcal N(0,1)$ for every $j$.

Let us recall that the precision matrix is closely related to the problem of regression of one feature on all the others.
Indeed, there exists a $p\times p$ matrix $\bfB^*$ and two vectors $\bfc^*,\bphi^*\in\R^p$ such that
\begin{equation}
 \bfX_{\bullet,j} = c_j^* \bfone_n - \bfX_{\bullet,j^c} \bfB^*_{j^c,j} + \phi_j^*{}^{1/2}\, \bxi_j, \label{eq:model}
\end{equation}
where $\bxi_j$ is drawn from $\mathcal{N}_n(0, \bfI_n)$ and is independent of $\bfX_{\bullet,j^c}$.
According to the theorem on normal correlations \citep{Marsaglia}, the regression coefficients
$\bfB^*_{j^c,j} \in \R^{p-1}$ and the variance $\phi^*_j \in  \R$ of residuals
can be expressed in terms of the elements of the precision matrix $\bfOmega^*$ as follows:
\begin{equation}
 \bfB^*_{ij} = {\omega^*_{ij}}/{\omega^*_{jj}} ,  \label{eq:normalCorr} \qquad
 \phi^*_j = 1/{\omega^*_{jj}}, \tag{NC}
\end{equation}
whereas $c_j^* = \mu_j^*+(\bmu_{j^c}^*)^\top\bfB^*_{j^c,j}=(\bmu^*)^\top\bfB^*_{\bullet,j}$.
If we assume that $\bmu^* = 0$ then $c^*_j = 0$ for any $j$. With these notation, the precision
matrix can be written as $\bfOmega^* = \bfB^*\bfD_{\bphi^*}^{-1}$.

Several state-of-the-art methods for estimating sparse precision matrices proceed in two steps \citep{MeinshausenBuhl,
CaiLiuLuo, TIGER, SunZhang13}. The first step consists in estimating the matrix $\bfB^*$ and the vector $\bphi^*$ by solving
the sparse linear regression problems (\ref{eq:model}) for each $j$, while in the second step an estimator of the
matrix $\bfOmega^*$ is inferred from the estimators of $\bfB^*$ and $\bphi^*$ using relations (\ref{eq:normalCorr}).
The goal of the present work is to explore both theoretically and empirically different possible strategies for this
second step.

The square-root Lasso is perhaps the method of estimating the matrix $\bfB^*$ that offers the
best trade-off between the computational and the statistical complexities. It can be redefined as follows: scaled Lasso
estimates the matrix $\bfB^*$ by solving the convex optimization problem
\begin{equation}\label{step:1}
\hat\bfB = \argmin_{\substack{\bfB\in \R^{p\times p}\\ \bfB_{jj}=1}}\limits\min_{\bfc\in\R^p}
\Big\{{\|\bfX\bfB - \bfone_n\bfc^\top\|}_{2,1}+\lambda{\|\bfB\|}_{1,1}\Big\},
\end{equation}
where the first $\min$ is over all matrices $\bfB$ having all their diagonal entries equal to $1$.
The tuning parameter $\lambda > 0$ corresponds to the penalty level.
The purpose of the penalization is indeed to get a precision matrix estimate which fits the sparsity assumption.
As the penalty of a matrix $\bfB$ is its $\|\cdot\|_{1,1}$ norm, the resulting precision matrix estimator is expected
to be sparse in the sense that its overall number of non-zero elements should be small. In addition, one can check that
computing a solution to problem (\ref{step:1}) is equivalent to computing each column of $\hat\bfB$ separately (and
independently)  by solving the optimization problem
\begin{equation}\label{step:1a}
\hat\bfB_{\bullet,j} = \argmin_{\substack{\bbeta\in \R^{p}\\ \bbeta_{j}=1}}\limits\min_{c_j\in\R}
\Big\{{\|\bfX\bbeta_{j}- c_j\bfone_n\|}_{2}+\lambda{\|\bbeta\|}_{1}\Big\},\quad j\in [p].
\end{equation}
In addition to being efficiently computable even for large $p$, this estimator has the following appealing property
that makes it preferable, for instance, to the column-wise Lasso \citep{MeinshausenBuhl} and the CLIME \citep{CaiLiuLuo}.
The choice of the parameter $\lambda$ in  (\ref{step:1}-\ref{step:1a}) is scale free: it can be chosen independently of
the noise variance in linear regression (\ref{eq:model}). This fact has been first established by \cite{BelloniChernozhukovWang}
and then further investigated in \citep{SunZhang12,BelloniChernozhukovWang2}. In the context of precision matrix estimation,
this method has been explored\footnote{Although \cite{SunZhang12,SunZhang13} refer to this method as the scaled Lasso, we prefer
to use the original term square-root Lasso coined by \cite{BelloniChernozhukovWang} in order to avoid any possible confusion
with the earlier method of \cite{Stadler1, Stadler2}, for which the term ``scaled Lasso'' has been already employed.}
by \cite{SunZhang13}.

\section{Four estimators of ${\mathbf\phi^*}$}

As mentioned earlier, the aim of this work is to compare different estimators of the vector
$\bphi^*$ based on an initial estimator of the matrix $\bfB^*$. Clearly, the error of the
estimation of $\bfB^*$ impacts the error of the estimation of $\bphi^*$ and, therefore, the
latter is not easy to assess in full generality. In order to gain some insight on the behavior
of various natural estimators, in theoretical results we will consider the ideal situation
where the matrix $\bfB^*$ is estimated without error.

\subsection{Residual variance estimator}
\label{sect:rv}

In view of the regression equation (\ref{eq:model}), a standard and natural method---used, in
particular, by the square-root Lasso of \cite{SunZhang13}---to deduce estimators $\hat\bphi$ and $\hat\bfOmega$
from an estimator $\hat\bfB$ is to set
\begin{equation}\label{step:2}
\hat\phi_{j} = \frac1n{\|(\bfI_n-n^{-1}\bfone_n\bfone^\top_n)\bfX\hat\bfB_{\bullet,j}\|}_{2}^2;\qquad
\hat\bfOmega = \hat\bfB\cdot\bfD_{\hat\bphi}^{-1}.
\end{equation}
Note that the matrix $(\bfI_n-n^{-1}\bfone_n\bfone^\top_n)$ present in this expression is the orthogonal projector in
$\R^n$ onto the orthogonal complement of the linear subspace $\text{Span}(\bfone_n)$ of all constant vectors. The multiplication
by this matrix annihilates the intercept $c_j^*$ in (\ref{eq:model}) and is a standard way of reducing the affine regression to
the linear regression. In what follows, we refer to $\hat\bphi$ defined by (\ref{step:2}) as the residual variance estimator and
denote it by $\bphiRV$. Using the sample covariance  matrix $\bfS_n$, the  residual variance estimator of $\bphi^*$ can be written
as
\begin{equation*}
\phiRV_j  = \hat\bfB_{\bullet,j}^\top \bfS_n \hat\bfB_{\bullet,j}.
\end{equation*}
Note also that if we consider the linear regression model (\ref{eq:model}) conditionally to $\bfX_{\bullet,j^c}$, then
the residual variance estimator of $\phi_j^*$ coincides with the maximum likelihood estimator.


\begin{prop}\label{prop:RVmeanVar}
If $\hat\bfB_{\bullet,j}$ estimates $\bfB^*_{\bullet,j}$ without error, then  the residual variance estimator of $\phi^*_j$
has a quadratic risk equal to $\frac2{n} {\phi_j^*}^2$, that is
\begin{equation*}
\Ex[(\phiRV_j-\phi^*_j)^2] = \frac{2\phi_j^*{}^2}{n}.
\end{equation*}
Furthermore, for every $t>0$, the following bound on the tails of the maximal error holds true:
\begin{equation*}
\prob\bigg(\max_{j\in[p]} \frac{|\phiRV_j-\phi^*_j|}{\phi^*_j} >2\Big(\frac{t+\log p}{n}\Big)^{1/2} +\frac{2(t+\log p ) }{n}\bigg)\le 2e^{-t}.
\end{equation*}
\end{prop}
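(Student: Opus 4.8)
The plan is to show that, once the regression vector is known exactly, the residual variance estimator is simply a rescaled chi-squared variable; both assertions then follow from the known law of $\chi^2_n$, namely an elementary moment computation for the quadratic risk and a standard chi-squared deviation inequality combined with a union bound for the tail.

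First I would reduce $\phiRV_j$ to a chi-squared statistic. Since $\hat\bfB_{\bullet,j}=\bfB^*_{\bullet,j}$ and $\bfB^*_{jj}=1$, the regression identity (\ref{eq:model}) together with the standing assumption $\bmu^*=0$ (hence $c_j^*=0$) gives $\bfX\bfB^*_{\bullet,j}=\bfX_{\bullet,j}+\bfX_{\bullet,j^c}\bfB^*_{j^c,j}=(\phi_j^*)^{1/2}\,\bxi_j$, where $\bxi_j\sim\mathcal N_n(0,\bfI_n)$. In the theoretical setting the mean is known, so I use $\hat\bmu=\bmu^*=0$ and $\bfS_n=n^{-1}\bfX^\top\bfX$, which yields
\[
\phiRV_j=\bfB^{*\top}_{\bullet,j}\,\bfS_n\,\bfB^{*}_{\bullet,j}=\frac1n\|\bfX\bfB^*_{\bullet,j}\|_2^2=\frac{\phi_j^*}{n}\,\|\bxi_j\|_2^2,
\]
where $\|\bxi_j\|_2^2\sim\chi^2_n$ is the squared norm of a standard Gaussian vector. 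This single identity is the crux; everything else is a consequence of the distribution of $\chi^2_n$. For the first assertion I would then insert the moments $\Ex[\chi^2_n]=n$ and $\Var[\chi^2_n]=2n$, obtaining $\Ex[\phiRV_j]=\phi_j^*$ (so the estimator is exactly unbiased) and therefore
\[
\Ex\big[(\phiRV_j-\phi_j^*)^2\big]=\Var[\phiRV_j]=\frac{(\phi_j^*)^2}{n^2}\,\Var[\chi^2_n]=\frac{2(\phi_j^*)^2}{n}.
\]

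For the tail bound I would write the relative error in the centered form $(\phiRV_j-\phi_j^*)/\phi_j^*=n^{-1}(\|\bxi_j\|_2^2-n)$ and invoke the classical Laurent--Massart deviation inequalities for a $\chi^2_n$ variable $Z$: for every $x>0$, $\prob(Z-n\ge 2\sqrt{nx}+2x)\le e^{-x}$ and $\prob(Z-n\le-2\sqrt{nx})\le e^{-x}$. After dividing by $n$, the upper threshold matches $2(x/n)^{1/2}+2x/n$ exactly, while the lower threshold $2(x/n)^{1/2}$ is dominated by it; hence each one-sided exceedance event is contained in $\{|\phiRV_j-\phi_j^*|/\phi_j^*>2(x/n)^{1/2}+2x/n\}$, giving a two-sided bound $2e^{-x}$ for each fixed $j$. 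A union bound over $j\in[p]$ with the choice $x=t+\log p$ then turns the prefactor $p$ into $p\,e^{-(t+\log p)}=e^{-t}$, yielding the stated $2e^{-t}$.

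I do not expect a genuine obstacle: the whole argument is driven by the exact chi-squared representation. The only two points requiring care are (i) that the number of degrees of freedom is exactly $n$ rather than $n-1$, which hinges on using the known-mean form of $\bfS_n$ so that no dimension is lost to centering (this is precisely what produces the clean $2(\phi_j^*)^2/n$ rather than $(2n-1)(\phi_j^*)^2/n^2$), and (ii) verifying that the stated symmetric threshold majorizes the smaller lower-tail threshold in the Laurent--Massart bound, so that the two-sided probability is controlled by $2e^{-x}$.
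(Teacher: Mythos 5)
Your proposal is correct and follows essentially the same route as the paper: reduce $\phiRV_j$ to $\frac{\phi_j^*}{n}\|\bxi_j\|_2^2$ via the exact regression identity with known mean, read off the risk from the moments of a $\chi^2_n$ variable, and obtain the tail bound from the union bound combined with the Laurent--Massart deviation inequality. Your additional remarks on the degrees of freedom and on the domination of the lower-tail threshold are sound refinements of the same argument, not a different one.
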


\begin{proof}
Using equation (\ref{eq:model}) and the assumption  $\hat\bfB_{\bullet,j}=\bfB^*_{\bullet,j}$, we get
\begin{equation}
\phiRV_j = \frac1{n} {\|\bfX\bfB^*_{\bullet,j}\|}_{2}^2 =
\frac{\phi_j^*}{n} {\|\bxi_j\|}_{2}^2.
\end{equation}
Since $\bxi_j$ is a standard Gaussian vector, the random variable
$\zeta = {\|\bxi_j\|}_{2}^2$ is drawn from a $\chi^2_{n}$ distribution.
This implies that $\Ex(\zeta)=n$ and $\Var(\zeta) = 2n$. Therefore,
\begin{equation*}
\Ex[(\phiRV_j-\phi^*_j)^2] =  \Ex\Big[\Big(\frac{\phi_j^*\zeta}{n}-\phi^*_j\Big)^2\Big]
 = \frac{\phi^*_j{}^2}{n^2}\Big(\Var(\zeta)+\big(\Ex(\zeta)-n\big)^2\Big)
 = \frac{2\phi^*_j{}^2}{n}.
\end{equation*}
This completes the proof of the first claim. To prove the second claim, we set $z = t+ \log p$ and use the union bound to get
\begin{align*}
\prob\bigg(\max_{j\in[p]} \frac{|\phiRV_j-\phi^*_j|}{\phi^*_j} >2\sqrt{\frac{z}{n}}  +\frac{2z}{n}\bigg)
        &\le p\max_{j\in [p]} \prob\bigg(\frac{|\phiRV_j-\phi^*_j|}{\phi^*_j} >2\sqrt{\frac{z}{n}} +\frac{2z}{n}\bigg) \\
        &= p\prob\big(|\zeta-n| >2\sqrt{zn} +2z\big).
\end{align*}
The second claim follows from the tail bound of the $\chi^2$ distribution established, for instance, in \cite[Lemma 1]{Laurent2000}.
\end{proof}
Note that in this result, the case of known means $\mu_j^*$ is considered. The case of unknown $\mu_j$ can be handled
similarly, the estimation bias is then $\phi_j^*/n$ and the resulting mean squared error is $(2n-1)\phi_j^*{}^2/n^2$.
One may observe that, as expected, the rate of convergence of the quadratic risk is the usual parametric rate $1/n$ and
that the asymptotic variance is $2\phi_j^*{}^2$.

\subsection{Relaxed maximum likelihood estimator}
\label{sect:ml}

One could expect that the global maximum likelihood estimator of $\bphi^*$ would be better than the
maximum of the conditional likelihood, since it is well known that under proper regularity conditions,
the quadratic risk of the maximum likelihood estimator is the smallest, at least asymptotically. Since
the vectors $\bfX_{k,\bullet}\sim \mathcal N_p(\bmu^*,\bfOmega^*{}^{-1})$ are independent, the log-likelihood
is given by (up to irrelevant additive terms independent of the unknown parameters $\bmu^*$ and $\bfOmega^*$)
\begin{equation}
\mathcal{L}(\bfX|\bmu,\bfOmega) = \frac{n}{2} \log|\bfOmega| - \frac1{2} \sum_{k=1}^n (\bfX_{k,\bullet}-\bmu^\top) \bfOmega
(\bfX_{k,\bullet}-\bmu^\top)^\top.
\end{equation}
Maximizing the log-likelihood with respect to $\bmu\in\R^p$ leads to
\begin{equation}
\max_{\bmu\in\R^p}\mathcal{L}(\bfX|\bmu,\bfOmega) = \frac{n}{2} \Big(\log|\bfOmega| - \trace\big[\bfS_n\bfOmega\big]\Big).
\end{equation}
Recall now that in view of (\ref{eq:normalCorr}), we have $\bfOmega^* = \bfB^* \bfD_{\bphi^*}^{-1}$. Therefore,
the profiled log-likelihood (w.r.t.\ $\bmu$) of $\bfX$ given the parameters $\bfB$ and $\bphi$ is
\begin{equation}\label{eq:PML}
\max_{\bmu\in\R^p}\mathcal{L}(\bfX|\bmu,\bfB,\bphi) = \frac{n}{2} \Big(\log|\bfB|-\sum_{j=1}^p
\big\{\log(\phi_j) +{(\bfS_n\bfB)}_{jj}\phi_j^{-1}\big\}\Big).
\end{equation}
For a given $\bfB$, this profiled  log-likelihood is a decomposable function of $\bphi$ and, therefore, can be
easily maximized with respect to $\bphi$. This leads to
\begin{equation}
\text{arg}\max_{\bphi\in\R_+^p}\max_{\bmu\in\R^p}\mathcal{L}(\bfX|\bmu,\bfB,\bphi) = \big((\bfS_n\bfB)_{jj}\vee 0\big)_{j\in[p]}.
\end{equation}
Thus, when an estimator $\hat\bfB$ of $\bfB^*$ is available, one possible approach for estimating $\bphi^*$ is to set
\begin{equation}\label{RML}
\phiRML_j= (\bfS_n\hat\bfB)_{jj}\vee 0,\qquad j\in[p].
\end{equation}
We call this estimator relaxed maximum likelihood (RML) estimator. It will be clear a little bit later why it is called
relaxed.  The analysis of the risk of the RML estimator is more involved than that of the RV estimator considered
in the previous section. This is due to the truncation at the level $0$. For this reason, the next result does not provide
the precise value of the risk, but just an inequality which is sufficient for our purposes.

\begin{prop}\label{prop:MLmeanVar}
If  $\hat\bfB$ estimates $\bfB^*_{\bullet,j}$ without error, then the risk of the RML estimator of $\phi_j^*$
satisfies  $\Ex[(\phiRML_j-\phi^*_j)^2]\ge \frac1{n} \big( {\phi_j^*}^2 + \phi_j^* \bfSigma^*_{jj} -O(n^{-1/2})\big)$.
\end{prop}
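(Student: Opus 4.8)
The plan is to compute exactly the mean and variance of the \emph{untruncated} quantity $(\bfS_n\hat\bfB)_{jj}$ and then to show that the truncation at $0$ in (\ref{RML}) perturbs the risk only by a term of order $n^{-3/2}$. I work in the standardized, known-mean setting ($\bmu^*=0$, so $\bfS_n=n^{-1}\bfX^\top\bfX$); the unknown-mean case is handled as in the remark following Proposition~\ref{prop:RVmeanVar}, the extra bias being of lower order. First I would exploit the regression identity. Under $\hat\bfB_{\bullet,j}=\bfB^*_{\bullet,j}$, the computation from the proof of Proposition~\ref{prop:RVmeanVar} gives $\bfX\bfB^*_{\bullet,j}=\phi_j^*{}^{1/2}\bxi_j$, so that
\begin{equation*}
(\bfS_n\bfB^*)_{jj}=\tfrac1n\bfX_{\bullet,j}^\top(\bfX\bfB^*_{\bullet,j})=\tfrac{\phi_j^*{}^{1/2}}{n}\bfX_{\bullet,j}^\top\bxi_j.
\end{equation*}
Substituting $\bfX_{\bullet,j}=-\bu_j+\phi_j^*{}^{1/2}\bxi_j$ with $\bu_j:=\bfX_{\bullet,j^c}\bfB^*_{j^c,j}$ splits this quantity into two pieces,
\begin{equation*}
Z_j:=(\bfS_n\bfB^*)_{jj}=\underbrace{\tfrac{\phi_j^*}{n}\|\bxi_j\|_2^2}_{=\,\phiRV_j}-\underbrace{\tfrac{\phi_j^*{}^{1/2}}{n}\bu_j^\top\bxi_j}_{=:\,C_j},
\end{equation*}
i.e.\ $Z_j$ is the RV estimator perturbed by the cross term $C_j$, and $\phiRML_j=Z_j\vee0$.

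Next I would compute the first two moments of $Z_j$. Since $\bxi_j$ is independent of $\bfX_{\bullet,j^c}$, hence of $\bu_j$, conditioning on $\bu_j$ gives $\Ex[C_j]=0$ and $\Ex[C_j^2]=\tfrac{\phi_j^*}{n^2}\Ex\|\bu_j\|_2^2$, so the untruncated estimator is unbiased, $\Ex[Z_j]=\phi_j^*$. The variance decomposition of the regression (\ref{eq:model}), namely $\bfSigma^*_{jj}=\bfB^*_{j^c,j}{}^\top\bfSigma^*_{j^c,j^c}\bfB^*_{j^c,j}+\phi_j^*$, together with $\Ex[\bfX_{\bullet,j^c}^\top\bfX_{\bullet,j^c}]=n\bfSigma^*_{j^c,j^c}$, yields $\Ex\|\bu_j\|_2^2=n(\bfSigma^*_{jj}-\phi_j^*)$, whence $\Var[C_j]=\phi_j^*(\bfSigma^*_{jj}-\phi_j^*)/n$. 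From Proposition~\ref{prop:RVmeanVar} we have $\Var[\phiRV_j]=2\phi_j^*{}^2/n$, and the covariance vanishes because $\Ex[\|\bxi_j\|_2^2\,\bxi_j]=0$ (odd Gaussian moments). Adding the three contributions,
\begin{equation*}
\Ex[(Z_j-\phi_j^*)^2]=\Var[Z_j]=\tfrac1n\big(\phi_j^*{}^2+\phi_j^*\bfSigma^*_{jj}\big).
\end{equation*}

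Finally I would account for the truncation. Separating on $\{Z_j<0\}$ and using $\phiRML_j=Z_j\vee0$,
\begin{equation*}
\Ex[(\phiRML_j-\phi_j^*)^2]=\Ex[(Z_j-\phi_j^*)^2]-\Ex\big[\big\{(Z_j-\phi_j^*)^2-\phi_j^*{}^2\big\}\mathds{1}(Z_j<0)\big],
\end{equation*}
and on $\{Z_j<0\}$ the bracket equals $Z_j^2-2\phi_j^*Z_j>0$, so the subtracted correction is nonnegative; this already furnishes the upper bound $\Var[Z_j]$, and the proposition is the matching lower bound. The main obstacle is therefore to show that this correction is $O(n^{-3/2})$. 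I expect this to follow from the fact that $\prob(Z_j<0)$ is exponentially small: the event $\{Z_j<0\}=\{C_j>\phiRV_j\}$ is contained in $\{\phiRV_j<\phi_j^*/2\}\cup\{C_j>\phi_j^*/2\}$, whose probabilities decay exponentially in $n$ by the $\chi^2$ tail bound \cite[Lemma 1]{Laurent2000} and by a Hanson--Wright-type deviation inequality for the bilinear form $C_j$ (or, conditionally on $\bu_j$, a Gaussian tail combined with concentration of $\|\bu_j\|_2^2$). Bounding the correction then reduces to Cauchy--Schwarz,
\begin{equation*}
\Ex\big[\big\{(Z_j-\phi_j^*)^2-\phi_j^*{}^2\big\}\mathds{1}(Z_j<0)\big]\le\big(\Ex[(Z_j-\phi_j^*)^4]\big)^{1/2}\,\prob(Z_j<0)^{1/2},
\end{equation*}
where the fourth moment is bounded uniformly in $n$. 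This makes the correction exponentially small, in particular $O(n^{-3/2})$, so that dividing by $n^{-1}$ absorbs it into the claimed $O(n^{-1/2})$ remainder and establishes the lower bound.
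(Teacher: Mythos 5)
Your proposal is correct and follows essentially the same route as the paper: the same decomposition of $(\bfS_n\bfB^*)_{jj}$ into the RV estimator plus a mean-zero cross term, the same computation giving $\Ex[(\bfS_n\bfB^*)_{jj}]=\phi_j^*$ and variance $(\phi_j^*{}^2+\phi_j^*\bfSigma^*_{jj})/n$, and the same Cauchy--Schwarz treatment of the truncation at zero. The only divergence is in bounding $\prob(Z_j\le 0)$: the paper uses a one-line Chebyshev bound giving $O(1/n)$, which suffices because it also establishes that the fourth moment of $Z_j-\phi_j^*$ is $O(n^{-2})$, whereas you invoke exponential tail bounds (which is more work than necessary, but equally valid since you then only need the fourth moment to be bounded).
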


Before providing the proof of this result, let us present a brief discussion. Note that in view of (\ref{eq:model}),
$\bfSigma^*_{jj}$ is always not smaller than $\phi_j^*$. Furthermore, $\bfSigma^*_{jj}>\phi_j^*$ if $\bfB^*_{j^c,j}$
has at least one nonzero entry.
Therefore, the last proposition, combined with Proposition~\ref{prop:RVmeanVar}, establishes that the
residual variance estimator has an asymptotic variance which is  smaller (and, in many cases, strictly smaller)
than the asymptotic variance of the maximum likelihood estimator. At a first sight, this is very surprising and seems to be
in contradiction with the well established theory \citep{LeCamBook,IbragimovBook} of asymptotic efficiency of the maximum
likelihood estimator for regular models. Our explanation of this inefficiency of $\phiRML_j$ is that it is not really
the maximum likelihood estimator. It maximizes the likelihood, certainly, but not over the correct set of parameters.
Indeed, when we  defined the RML we neglected an important property of the vector $\bphi^*$: the fact that
$\bfB^*\bfD_{\bphi^*}^{-1}=\bfD_{\bphi^*}^{-1}\bfB^*{}^\top$ (this follows from the symmetry of $\bfOmega^*$).
Ignoring this constraint allowed us to get a tractable optimization problem but caused the loss of the (asymptotic)
efficiency of the estimator. This also explains why we call $\bphiRML$ \textit{relaxed} maximum likelihood estimator.

\begin{proof}[Proof of Proposition \ref{prop:MLmeanVar}]
Since $\bmu^*$ is assumed to be known and equal to zero, according to (\ref{eq:model}), we have
\begin{equation*}
(\bfS_n\hat\bfB)_{jj} = \frac1n \bfX_{\bullet,j}^\top \bfX\bfB^*_{\bullet,j} = \frac{\phi_j^*{}^{1/2}}n \bfX_{\bullet,j}^\top\bxi_j
=\frac{\phi_j^*{}^{1/2}}n \big(-\bfX_{\bullet,j^c}\bfB^*_{j^c,j}+\phi_j^*{}^{1/2}\;\bxi_j\big)^\top\bxi_j.
\end{equation*}
Denoting $\eta_1=-\bxi_j^\top\bfX_{\bullet,j^c}\bfB^*_{j^c,j}$, we get
$(\bfS_n\hat\bfB)_{jj} = \frac1n({\phi_j^*{}^{1/2}}\;\eta_1+ {{\phi_j^*}} \;\|\bxi_j\|_2^2)$. Furthermore,
it follows from (\ref{eq:model}) that $\Ex[(\bfX_{k,j^c}\bfB^*_{j^c,j})^2]= \bfSigma_{jj}^*-\phi_j^*$ for each $k$. Since, in addition,
for different $k$s the random variables $\bfX_{k,j^c}\bfB^*_{j^c,j}$ are independent, centered and Gaussian, we get
that---in view of the independence of $\bxi_j$ and $\bfX_{\bullet,j^c}$---the conditional distribution of
$\eta_1$ given $\bxi_j$ is Gaussian with zero mean and variance  $\|\bxi_j\|_2^2(\bfSigma^*_{jj}-\phi_j^*)$. Hence,
the random variable $\eta = \eta_1/(\|\bxi_j\|_2(\bfSigma^*_{jj}-\phi_j^*)^{1/2})$ is standard Gaussian, independent
of $\|\bxi_j\|_2^2$ and
\begin{equation*}
(\bfS_n\hat\bfB)_{jj} = \frac{\sqrt{\phi_j^*\|\bxi_j\|_2^2(\bfSigma^*_{jj}-\phi_j^*)}}n \;\eta+
\frac{{\phi_j^*}}n \;\|\bxi_j\|_2^2.
\end{equation*}
This relation readily implies that $\Ex[(\bfS_n\hat\bfB)_{jj}]= \phi_j^*$ and
\begin{equation*}
\Ex[((\bfS_n\hat\bfB)_{jj}-\phi_j^*)^2]=\Var[(\bfS_n\hat\bfB)_{jj}] = \frac{\bfSigma_{jj}^*\phi_j^*+\phi_j^*{}^2}{n}.
\end{equation*}
Furthermore, for the fourth moment, we have
\begin{align*}
\Ex[((\bfS_n\hat\bfB)_{jj}-\phi_j^*)^4] &\le  \frac{8\phi_j^*{}^2(\bfSigma^*_{jj}-\phi_j^*)^2}{n^4}\Ex[\|\bxi_j\|_2^4]\Ex[\eta^4]
+\frac{8\phi_j^*{}^4}{n^4} \;\Ex[(\|\bxi_j\|_2^2-n)^4]\nonumber\\
&\le  \frac{72\phi_j^*{}^2(\bfSigma^*_{jj}-\phi_j^*)^2}{n^2}+\frac{8\phi_j^*{}^4}{n^4} (60n+12n^2).
\end{align*}
To analyze the truncated estimator, we set $\zeta = (\bfS_n\hat\bfB)_{jj}$. Then $\phiRML_j = \zeta\cdot\mathds 1(\zeta>0)$ and hence,
\begin{align*}
\Ex[(\phiRML_j -\phi_j^*)^2]
		&=\Ex[(\zeta -\phi_j^*)^2\mathds 1(\zeta>0)]+\phi_j^*{}^2\prob(\zeta\le 0) \\
		&=\Ex[(\zeta -\phi_j^*)^2]-\Ex[(\zeta -\phi_j^*)^2\mathds 1(\zeta\le 0)]+\phi_j^*{}^2\prob(\zeta\le 0) \\
		&\ge \Ex[(\zeta -\phi_j^*)^2]-\Ex[(\zeta -\phi_j^*)^4]^{1/2}\prob(\zeta\le 0)^{1/2}.		
\end{align*}
We have already computed the first expectation in the right-hand side, as well as upper bounded the second one. Let
us show that the probability of the event $\zeta\le 0$ goes to zero as $n$ increases to $\infty$. This follows from the
Tchebychev inequality, since $\prob(\zeta\le 0) = \prob(\phi_j^*-\zeta\ge \phi_j^*)\le \Var[\zeta]/\phi_j^*{}^2=O(1/n)$.
This completes the proof of the proposition.
\end{proof}

\subsection{MLE taking into account the symmetry constraints}
\label{sect:mls}

As we have seen in previous sections, the relaxed maximum likelihood estimator is suboptimal; in particular,
it is less accurate than the residual variance estimator. To check that this lack of efficiency is indeed due
to the relaxation of the symmetry constraints, we propose here to analyze the constrained maximum likelihood
estimator in the following idealized set-up. We will consider, as in Propositions~\ref{prop:RVmeanVar} and
\ref{prop:MLmeanVar}, that $\hat\bfB$ estimates $\bfB^*$ without error, and that\footnote{This assumption
will be relaxed later in this subsection.} there is a column $\bfB^*_{\bullet,i}$ in $\bfB^*$ such that all
the elements of $\bfB^*_{\bullet,i}$ are different from zero. Without loss of generality, we suppose that
$i=1$ and, consequently, for every $j\in[p]$, we have $\bfB^*_{j1} \ne 0$ which is equivalent to $\omega^*_{j1} \ne 0$.
Therefore, the symmetry constraint $\bfB^*\bfD_{\bphi^*}^{-1}=\bfOmega^*=\bfOmega^*{}^\top=\bfD_{\bphi^*}^{-1}\bfB^*{}^\top$
implies that $\bfD_{\bphi^*}\bfB^*=\bfB^*{}^\top\bfD_{\bphi^*}$ and, in particular, that
\begin{equation*}
 \bfB^*_{1j}\phi_1^* = \bfB^*_{j1}\phi_j^* ,\qquad \forall j\in[p] .
\end{equation*}
This relation entails that in the case of known matrix $\bfB^*$ and unknown vector $\bphi^*$, only the first
entry of $\bphi^*$ needs to be estimated, all the remaining entries can be computed using the first one
by the formula $\hat\phi_j = (\bfB^*_{1j}/\bfB^*_{j1})\hat\phi_1$.

\begin{prop}\label{prop:ML1sym}
Under the assumption that the rows of $\bfX$ are i.i.d.\ Gaussian vectors with precision matrix $\bfOmega^* = \bfB^*\bfD_{\bphi^*}^{-1}$,
the maximum likelihood estimator of $\bphi^*$ is defined by
\begin{equation}\label{phiML}
\phiML_j = \frac1{p} (\bfB^*_{1j}/\bfB^*_{j1})\trace \big(\bfS_{n} \bfB^* \bfD_{\bfB^*_{\bullet,1}} \bfD_{\bfB^*_{1,\bullet}}^{-1}\big).
\end{equation}
The quadratic risk of this estimator is given by
\begin{equation}\label{riskML1}
\Ex[(\phiML_j-\phi_j^*)^2] = \frac2{np}\;\phi_j^*{}^2.
\end{equation}
Furthermore, for every $t>0$, the following bound on the tails of the maximal error holds true:
\begin{equation*}
\prob\bigg(\max_{j\in[p]} \frac{|\phiML_j-\phi^*_j|}{\phi^*_j} >2\Big(\frac{t+\log p}{np}\Big)^{1/2} +\frac{2(t+\log p ) }{np}\bigg)\le 2e^{-t}.
\end{equation*}
\end{prop}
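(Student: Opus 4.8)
The plan is to use the reduction noted just before the statement---that the symmetry constraint leaves only the single scalar $\phi_1$ free---and then to recognise that the resulting maximum likelihood estimator is, up to a deterministic factor, a $\chi^2_{np}$ variable. Throughout I work under the standing assumption $\bmu^*=0$ known, so that $\bfS_n=\tfrac1n\bfX^\top\bfX$.

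First I would derive formula \eqref{phiML}. Set $\bfA:=\bfB^*\bfD_{\bfB^*_{\bullet,1}}\bfD_{\bfB^*_{1,\bullet}}^{-1}$, a matrix depending only on the known $\bfB^*$; its diagonal factor has entries $\bfB^*_{j1}/\bfB^*_{1j}$. Imposing the constraint $\phi_j=(\bfB^*_{1j}/\bfB^*_{j1})\phi_1$ gives $\bfD_{\bphi}^{-1}=\phi_1^{-1}\bfD_{\bfB^*_{\bullet,1}}\bfD_{\bfB^*_{1,\bullet}}^{-1}$ and hence $\bfOmega=\bfB^*\bfD_{\bphi}^{-1}=\phi_1^{-1}\bfA$. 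Substituting into the profiled log-likelihood $\tfrac n2(\log|\bfOmega|-\trace[\bfS_n\bfOmega])$ turns it into $\tfrac n2(-p\log\phi_1+\log|\bfA|-\phi_1^{-1}\trace[\bfS_n\bfA])$, whose first-order condition in $\phi_1$ has the unique solution $\hat\phi_1=\tfrac1p\trace[\bfS_n\bfA]$. Since $\bfA$ is positive definite this maximiser is automatically positive, so (in contrast with the RML estimator) no truncation arises. Evaluating at $j=1$ reproduces \eqref{phiML}, and multiplying by $\bfB^*_{1j}/\bfB^*_{j1}$ gives the formula for every $j$.

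The step I expect to carry the real content is the identity $\bfA=\phi_1^*\bfOmega^*$. The diagonal factor of $\bfA$ has entries $\bfB^*_{j1}/\bfB^*_{1j}$, and the symmetry relation $\bfB^*_{1j}\phi_1^*=\bfB^*_{j1}\phi_j^*$ rewrites these as $\phi_1^*/\phi_j^*$; therefore $\bfA=\phi_1^*\bfB^*\bfD_{\bphi^*}^{-1}=\phi_1^*\bfOmega^*$. This makes $\bfSigma^{*1/2}\bfA\bfSigma^{*1/2}=\phi_1^*\bfSigma^{*1/2}\bfOmega^*\bfSigma^{*1/2}=\phi_1^*\bfI_p$. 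Writing $\hat\phi_1=\tfrac1{np}\sum_{k=1}^n\bfX_{k,\bullet}\bfA\bfX_{k,\bullet}^\top$ and $\bfX_{k,\bullet}^\top=\bfSigma^{*1/2}z_k$ with $z_k\sim\mathcal N_p(0,\bfI_p)$ i.i.d., each summand collapses to $\phi_1^*\|z_k\|_2^2$. Hence $\hat\phi_1=\tfrac{\phi_1^*}{np}\,\zeta$ with $\zeta:=\sum_k\|z_k\|_2^2\sim\chi^2_{np}$, and consequently $\phiML_j=\tfrac{\phi_j^*}{np}\,\zeta$ for every $j$.

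Everything else is then a transcription of the proof of Proposition \ref{prop:RVmeanVar} with $n$ replaced by $np$. From $\Ex[\zeta]=np$ and $\Var[\zeta]=2np$ I would read off $\Ex[\phiML_j]=\phi_j^*$ and $\Ex[(\phiML_j-\phi_j^*)^2]=\Var[\phiML_j]=\tfrac{\phi_j^{*2}}{(np)^2}\Var[\zeta]=\tfrac{2\phi_j^{*2}}{np}$, which is \eqref{riskML1}. For the tail bound, note that $|\phiML_j-\phi_j^*|/\phi_j^*=|\zeta-np|/(np)$ does not depend on $j$; writing $z=t+\log p$, the event in question is exactly $\{|\zeta-np|>2\sqrt{np\,z}+2z\}$, and the two-sided $\chi^2$ tail inequality of \cite[Lemma 1]{Laurent2000} bounds its probability by $2e^{-z}$. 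Applying the union bound over the $p$ coordinates, exactly as in Proposition \ref{prop:RVmeanVar}, yields the stated $2e^{-t}$ (and in fact, because the relative errors coincide across $j$, the sharper $2e^{-t}/p$ holds). The only genuine obstacle is the collapse to a scaled $\chi^2_{np}$; once $\bfA=\phi_1^*\bfOmega^*$ is established, the remaining computations are routine.
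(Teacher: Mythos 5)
Your proposal is correct and follows essentially the same route as the paper's proof: reduction via the symmetry constraint to the single parameter $\phi_1$, the identity $\bfB^*\bfD_{\bfB^*_{\bullet,1}}\bfD_{\bfB^*_{1,\bullet}}^{-1}=\phi_1^*\bfOmega^*$ (which the paper states entrywise as $(\bfS_n\bfB^*)_{jj}\bfB^*_{j1}/\bfB^*_{1j}=\phi_1^*(\bfS_n\bfOmega^*)_{jj}$), the collapse of $\trace(\bfS_n\bfOmega^*)$ to a $\chi^2_{np}$ variable, and the Laurent--Massart tail bound. Your side remark that the union bound is unnecessary here (since the relative errors coincide across $j$, giving the sharper constant $2e^{-t}/p$) is a valid minor improvement.
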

\begin{proof}
To ease notation, we denote by $\bfD^*$ the diagonal matrix whose $j$th element is $\bfB^*_{j,1} / \bfB^*_{1,j}$.
Then, applying (\ref{eq:PML}) for a given $\bfB^*$, the profiled Gaussian log-likelihood can be written as
\begin{align*}
 \max_{\bmu\in\R^p}\mathcal{L}(\bfX|\bmu,\bfB^*,\bphi) = \frac{n}{2}\log|\bfB^*|-\frac{n}{2} \sum_{j=1}^p \big\{\log(\phi_j) +
		(\bfS_n\bfB^*)_{jj}\phi_{j}^{-1}\big\}.
\end{align*}
The goal is to maximize the right-hand side over all the vectors $\bphi\in\R^p$ such that $\bfB^*\bfD_{\bphi}^{-1}$ is a valid precision
matrix.

Let us first check that under the conditions of the proposition, for $\bfB^*\bfD_{\bphi}^{-1}$ to be a valid precision
matrix it is necessary and sufficient that $\phi_1>0$ and $\phi_j = (\bfB^*_{1j}/\bfB^*_{j1})\phi_1$ for every $j\in[p]$.
The necessary part follows from that fact that a precision matrix is symmetric and positive-semidefinite, which entails
that $(\bfB^*\bfD_{\bphi}^{-1})_{1j} = (\bfB^*\bfD_{\bphi}^{-1})_{j1}$  and $(\bfB^*\bfD_{\bphi}^{-1})_{jj} = \bphi_j^{-1}>0$.
Therefore, $\phi_j = (\bfB^*_{1j}/\bfB^*_{j1})\phi_1$ and $\phi_1>0$. To check the sufficient part, we remark that if
$\bphi$ satisfies $\phi_j = (\bfB^*_{1j}/\bfB^*_{j1})\phi_1$ with $\phi_1>0$, then $\bfB^*\bfD_{\bphi}^{-1} =
(\phi^*_1/\phi_1)\bfB^*\bfD_{\bphi^*}^{-1}=(\phi^*_1/\phi_1)\bfOmega^*$. This implies that $\bfB^*\bfD_{\bphi}^{-1}$
is symmetric and positive-semidefinite, hence a valid precision matrix.

The maximum likelihood estimator $\bphiML$ is thus given by
\begin{equation*}
\bphiML\in\text{arg}\min_{\substack{\bphi\in\R^p_+\\ \phi_j = (\bfB^*_{1j}/\bfB^*_{j1})\phi_1}} \sum_{j=1}^p \big\{\log(\phi_j) +
		(\bfS_n\bfB^*)_{jj}\phi_{j}^{-1}\big\},
\end{equation*}
which leads to $\phiML_1\in\text{arg}\min_{\phi_1>0} \big\{p\log(\phi_1) +
\phi_{1}^{-1}\sum_j (\bfS_n\bfB^*)_{jj}\bfB_{j1}^*/\bfB^*_{1j}\big\}$. The cost function of the last minimization problem is convex, since
we have $(\bfS_n\bfB^*)_{jj}\bfB_{j1}^*/\bfB^*_{1j} = (\bfS_n\bfB^*)_{jj}\phi_{1}^*/\phi^*_{j}=\phi_{1}^*(\bfS_n\bfOmega^*)_{jj}$. This implies that
\begin{equation*}
\sum_j(\bfS_n\bfB^*)_{jj}\bfB_{j1}^*/\bfB^*_{1j} =\phi_{1}^*\trace(\bfS_n\bfOmega^*)=\phi_{1}^*\trace(\bfOmega^*{}^{1/2}
\bfS_n\bfOmega^*{}^{1/2})\ge 0.
\end{equation*}
The aforementioned cost function is continuously differentiable and convex, its minimum is attained at
the point where the derivative vanishes, which provides $\phiML_1 = \frac1p\sum_j (\bfS_n\bfB^*)_{jj}\bfB_{j1}^*/\bfB^*_{1j}$.
Combining with the relation $\phiML_j = (\bfB^*_{1j}/\bfB^*_{j1})\phiML_1$, this leads to (\ref{phiML}).

To check (\ref{riskML1}), we start by noting that
\begin{equation*}
\phiML_j = \frac1p \phi^*_j \trace(\bfS_n\bfOmega^*) = \frac1{np} \phi^*_j \trace(\bfX^\top\bfX\bfSigma^*{}^{-1}).
\end{equation*}
Using the well-known commutativity property of the trace operator and setting $\bfY = \bfSigma^*{}^{-1/2}\bfX^\top$,
we get $\trace(\bfX^\top\bfX\bfSigma^*{}^{-1})=\trace(\bfY^\top\bfY)$. Since $\bfX$ has
iid rows drawn from a $\mathcal N_p(0,\bfSigma^*)$ distribution, $\bfY$ has iid columns drawn from $\mathcal N_p(0,\bfI_p)$
distribution. Hence, the random variable $\trace(\bfY^\top\bfY) = \sum_{j\in[p],k\in[n]}\bfY_{jk}^2$ is distributed according to
$\chi^2_{np}$ distribution. This readily implies that $\phiML_j$ is an unbiased estimator of $\phi_j^*$ and, therefore, its
quadratic risk  coincides with its  variance  and is given by (\ref{riskML1}).

The proof of the last claim of the proposition is very similar to that of the second claim
of Proposition~\ref{prop:RVmeanVar}.
\end{proof}

Assuming that there exists $i\in[p]$ such that for any $j\in[p]$, $\omega^*_{ij} \ne 0$, put differently that
the $i$-th node of the graph $\mathscr G^*$  is connected by an edge to any other node is quite restrictive.
Among other implications, it entails that the graph $\mathscr G^*$ is connected which might be a strong assumption.
It is therefore useful to adapt what precedes to the case where the graph $\mathscr G^*$ has more than one
connected component. The rest of this subsection is devoted to the description of this adaptation.

We note $\mathscr C$ the set of the connected components of the graph $\mathscr G^*$. Each connected component
$c \in \mathscr C$ is a subset of vertices of $\mathscr G^*$ whose cardinality is denoted by $p_c$. Clearly,
the sum of $p_c$ over all $c\in \mathscr C$ equals $p$. For two vertices $i$ and $j$, we will write $i \sim_{\mathscr G^*\!} j$
for indicating that they belong to the same connected component. Thus, each connected component is a class of equivalence
with respect to the relation $\sim_{\mathscr G^*\!}$.
Let $i\sim_{\mathscr G^*\!} j$ be two vertices from $c\in \mathscr C$ and let $C_{ji}$ be a path connecting these two
vertices,\textit{i.e.}, $C_{ji}$ is a sequence of $q$ distinct vertices $\{ v_1 , \ldots , v_q \}$ such that $v_1 = j$,
$v_q = i$, $q \le p_c$ and each pair $(v_h,v_{h+1})$ is connected by an edge in $\mathscr G^*$. Recall that the symmetry
of the precision matrix $\bfOmega^* = \bfB^*\bfD^{-1}_{\bphi^*}$ implies that $\bfB^*_{v_h,v_{h+1}}\phi^*_{v_h} =
\bfB^*_{v_{h+1},v_h}\phi^*_{v_{h+1}}$ for every $h\in[q-1]$. This readily yields
\begin{equation*}
 \phi^*_{j} = \phi^*_{i} \prod_{1 \le h < q}  \big(\bfB^*_{v_{h+1},v_h}/\bfB^*_{v_h,v_{h+1}}\big).
\end{equation*}
To ease notation, we introduce the $p \times p$ diagonal matrix $\bfDelta_j^*$  the diagonal entries of which
are defined by
\begin{equation}\label{delta}
 (\bfDelta_j^*)_{ii} = \mathds 1(i \sim_{\mathscr G^*}\! j)
\prod_{1 \le h < q} \big(\bfB^*_{v_{h+1},v_h}/\bfB^*_{v_h,v_{h+1}}\big),
\end{equation}
where $\{v_1,\ldots,v_q\} = C_{ji}$ is any path connecting $j$ to $i$ in $\mathscr G^*$. With this notation,
$\phi^*_j =(\bfDelta_j^*)_{ii}\phi^*_i$. One can reproduce the arguments of the proof of Proposition~\ref{prop:ML1sym}
to check that the maximum likelihood estimator of $\bphi^*$, if $\bfB^*$ is known (and therefore so is $\bfDelta_j^*$),
is defined by
\begin{equation}\label{phiMLS}
\phiML_j = \frac1{p_c} \trace \big(\bfDelta_j^* \bfS_{n} \bfB^*\big),
\end{equation}
for $j$ belonging to the connected component $c$.

Comparing the results of Propositions~\ref{prop:RVmeanVar}, \ref{prop:MLmeanVar} and \ref{prop:ML1sym}, we observe that
the RV-estimator outperforms the RML estimator, but---at least in the case where there is a column in $\bfB^*$ which has
only nonzero entries---they are both dominated by the maximum likelihood estimator that takes advantage of the symmetry
constraints. Furthermore, using the same type of arguments as those of Proposition~\ref{prop:ML1sym}, one can check that if the
vertex $j$ of the graph $\mathscr G^*$ belongs to a connected component of cardinal $p_c$ then the risk of the MLE in the ideal
case of known $\bfB^*$ is equal to $\frac2{np_c}\phi_j^*{}^2$. This shows that in the ideal case the MLE systematically
outperforms the widely used residual variance estimator, and the gain in the risk may be huge for vertices belonging to
large connected components. On the other extreme, all the three estimators discussed in the previous section coincide when
the matrix $\bfB^*$ is diagonal.

In order to apply equation (\ref{phiMLS}) for estimating $\bphi^*$ when an estimator $\hat\bfB$ of $\bfB^*$ is available,
we need to construct an estimator $\hat{\mathscr G}$ of the graph $\mathscr G^*$. We propose here an original approach for
deriving $\hat{\mathscr G}$ from $\hat\bfB$. It is based on the observation that $\bfB^*_{ij}\bfB^*_{ji} =
{\omega^*_{ij}}^2/(\omega_{ii}^*\omega_{jj}^*)^2$, the square of the partial correlation between the $i$-th and $j$-th
variables. As mentioned earlier, this quantity is always between 0 and 1 and provides a convenient rule of selection for
the edges to keep in the graph. More precisely, we connect $i$ to $j$ if the estimated squared partial correlation
$\hat\bfB_{ij}\hat\bfB_{ji}$ is larger than a prescribed threshold $t\in (0,1)$. In our implementation, we chose (somewhat
arbitrarily) the threshold $t=0.01 \wedge n^{-1/2}$.

Note that when $\bfB^*$ is replaced by an estimator, the right-hand side of (\ref{phiMLS}) is not necessarily invariant
with respect to the choice of the path connecting $i$ to $j$. Therefore, even when $\hat\bfB$ and $\hat{\mathscr G}$ are
fixed, if $\hat{\mathscr G}$ contains loops there are different ways of estimating $\bphi^*$ based on (\ref{phiMLS})
depending on how the paths are chosen. We have tried two possible approaches: the minimum spanning tree and the shortest
path tree based on the following weight function\footnote{A weight equal to zero corresponds to the absence of edge.}
defined on the edges:
\begin{align*}
\bfW_{ij} = \left\lbrace \begin{array}{ll}
                  \exp\big(- \hat\bfB_{ij}\hat\bfB_{ji}\big)
									\mathds 1(\hat\bfB_{ij}\hat\bfB_{ji} > t), & \qquad
									\text{for } i \ne j , \\
                  0, &\qquad \text{otherwise.}
                 \end{array}
	  \right.
\end{align*}
Combining these ingredients, we get the algorithm summarized in Algorithm~\ref{algo:SPT}.
\begin{algorithm}
   \caption{Estimator $\bphiML$  based on shortest path trees or minimum spanning trees}
   \label{algo:SPT}
\begin{algorithmic}
\normalsize
\STATE {\bfseries Input:} matrices $\bfX$ and $\hat\bfB$, threshold $t$.
\STATE {\bfseries Output:} vector $\bphiML$.
\STATE {\tt 1:} compute the matrix of weights $\bfW$.
\STATE {\tt 2:} initialize $k$ to 1.
\REPEAT
\STATE {\tt 3:} choose the node with the largest degree as root.
\STATE {\tt 4:} compute the shortest path tree (or the minimum spanning tree)
$\mathscr T_k$ from the chosen root.
\STATE {\tt 5:} estimate $\bphiML$'s elements related to $\mathscr T_k$
using Eq.\ (\ref{phiMLS})
\STATE {\tt 6:} remove all the nodes of the tree $\mathscr T_k$ from the initial graph.
\STATE {\tt 7:} increment $k$.
\UNTIL{graph is empty}
\end{algorithmic}
\end{algorithm}

The rationale behind the foregoing definition of the weights and the use of the minimum spanning tree or
shortest path tree algorithm is to favor the paths that are short and contain edges corresponding to large
(in absolute value) partial correlations. The aim is to reduce the risk of propagating the estimation error
of $\hat\bfB$. We have implemented both versions of the algorithm and have observed that the version using
the minimum spanning tree leads to better results. More details on the implementation and computational
complexity are given in the next section.

\begin{table}[ht]
\begin{tabular}{|*{10}{c|}}
\toprule
$p$ & \multicolumn{3}{c|}{30} & \multicolumn{3}{c|}{60} & \multicolumn{3}{c|}{90}\\
\midrule
$n$ & 200 & 800 & 2000 & 200 & 800 & 2000 & 200 & 800 & 2000 \\
\midrule
\multicolumn{10}{|c|}{\bf $\bfB^*$ estimated by square-root Lasso} \\
\midrule
RV & {\bf 0.883} & {\bf 0.399} & {\bf 0.224} & {\bf 1.425} & {\bf 0.649} & {\bf 0.374 } & {\bf 1.849} & {\bf 0.853} & {\bf 0.495} \\
 & {\bf (.077)} & {\bf (.036)} & {\bf (.016)} & {\bf (.075)} & {\bf (.030)} & {\bf (.022)} & {\bf (.085)} & {\bf (.029)} & {\bf (.019)} \\
\midrule
RML & 1.356  & 0.786  & 0.532  & 2.114  & 1.234  & 0.841  &  2.705 &  1.590 & 1.086 \\
   & (.079) & (.040) & (.017) & (.082) & (.032) & (.022) & (.090) & (.029) & (.019) \\
\midrule
SML & 1.476  & 0.805  & 0.548  & 2.388 & 1.250  & 0.852  & 3.104& 1.608  & 1.096\\
    & (.098) & (.040) & (.018) &  (.164) & (.032) & (.021) & (.188) & (.028) & (.020) \\
\midrule
PML & 1.371  & 0.792  & 0.539 & 2.134  & 1.236  & 0.846  & 2.728 & 1.593  & 1.089\\
    & (.079) & (.041) &(.017) &(.078) &  (.032) & (.021) & (.091) & (.030) & (.019) \\
\midrule
\multicolumn{10}{|c|}{\bf $\bfB^*$ estimated by square-root Lasso followed by OLS} \\
\midrule
RV & {\bf 0.726 } & {\bf 0.340} & {\bf 0.241 } & {\bf 1.088 } & {\bf 0.616 } & {\bf 0.354 } & {\bf 1.365} & {\bf 0.854 } & {\bf 0.443 } \\
& {\bf (.079)} & {\bf (.045)} & {\bf  (.016)} & {\bf (.076)} & {\bf (.051)} & {\bf(.020)} & {\bf (.080)} & {\bf (.046)} & {\bf (.018)} \\
\midrule
RML & {\bf 0.726 } & {\bf 0.340 } & {\bf 0.241} & {\bf 1.088} & {\bf 0.616} & {\bf 0.354} & {\bf 1.365} & {\bf 0.854} & {\bf 0.443} \\
 & {\bf (.079)} & {\bf (.045)} & {\bf (.016)} & {\bf (.076)} & {\bf (.051)} & {\bf (.020)} & {\bf (.080)} & {\bf (.046)} & {\bf (.018)} \\
\midrule
SML & 0.807  & 0.440  & 0.280  & 1.193 & 0.793 & 0.381 & 1.557  & 1.116  & 0.468 \\
 &  (.082) & (.058) &(.018) &  (.088) &  (.066) & (.018) & (.170) &  (.089) &  (.018) \\
\midrule
PML & 0.737  & 0.419 & 0.302  & 1.095 & 0.722 & 0.405 & 1.377 & 0.984  & 0.494 \\
 &  (.074) & (.051) & (.018) &  (.071) &  (.052) & (.019) &(.081) & (.044) & (.019) \\
\midrule
\multicolumn{10}{|c|}{\bf $\bfB^*$ is estimated without error } \\
\midrule
RV & 0.263 & 0.132 & 0.081 & 0.370 & 0.179 & 0.115 & 0.455 & 0.222 & 0.143 \\
 & (.034) & (.017) & (.012) & (.038) & (.017) & (.008) & (.038) & (.019) & (.012) \\
 \midrule
RML & 0.322  & 0.165 & 0.104& 0.463 & 0.227  & 0.144  & 0.562 & 0.280 & 0.178  \\
 & (.042) &(.018) & (.013) &  (.038) & (.022) & (.011) & (.040) & (.020) & (.015) \\
 \midrule
SML & {\bf 0.043 } & {\bf 0.024 } & {\bf 0.010} & {\bf 0.042} & {\bf 0.018} & {\bf 0.011} & {\bf 0.042} & {\bf 0.015} & {\bf 0.010} \\
 & {\bf  (.030)} & {\bf (.018)} & {\bf (.010)} & {\bf (.030)} & {\bf (.014)} & {\bf (.009)} & {\bf(.037)} & {\bf (.013)} & {\bf (.007)} \\
 \midrule
PML & 0.079  & 0.043  & 0.023  & 0.107  & 0.049 & 0.030  & 0.128 & 0.059  & 0.039 \\
 & (.025) &  (.015) & (.007) &  (.028) & (.012) &  (.007) & (.027) &  (.011) & (.007) \\
\bottomrule
\end{tabular}
\caption{Performance of the estimators of diagonal elements of the precision matrix in Model 1. The number of replications in each case
is $R=50$. More details on the experimental set-up are presented in Section~\ref{ssec:exp}.}
\label{tab:1}
\end{table}

\subsection{Penalized maximum likelihood estimation}
\label{sect:pml}

We have seen that enforcing symmetry constraints is beneficial when the matrix $\hat\bfB$ has a small error, but raises
intricate issues related to the graph estimation and, more importantly, path selection in the graph. A workaround to this issue
is to replace the hard constraints by a penalty term that measures the degree of violation of the constraints. This provides an
intermediate solution between the SML and the RML. More precisely, we propose a penalized maximum likelihood (PML) estimator of
$\bphi^*$ defined by
\begin{equation}
 \bphiPML \in \argmin_{\bphi \in (0,1]^p} \bigg\{\sum_{j=1}^p \{ \log(\phi_j) + (\bfS_n \hat\bfB)_{j,j} \phi_j^{-1} \} + \kappa
\sum_{i<j \atop \hat\bfB_{ji}\hat\bfB_{ij}>t} \frac{(\hat\bfB_{ji} \phi_i^{-1} - \hat\bfB_{ij} \phi_j^{-1})^2}{\hat\bfB_{ij}^2 +
\hat\bfB_{ji}^2}\bigg\},
 \label{eq:phiPML}
\end{equation}
where $\kappa>0$ is a tuning parameter responsible for the trade-off between the likelihood and the constraint violation.
The choice $\kappa=\infty$ corresponds to enforcing the symmetry constraints: its main shortcoming is that the feasible set
might very well be empty. On the other extreme, when $\kappa=0$, PML coincides with the RML.
The PML estimator also coincides with the previous ones if $\hat\bfB$ is known to be diagonal.

Note that the parameter $t$ appearing in the penalty term of the PML plays the same role as the one used in the SML. The definition
of the feasible set in the above optimization problem is justified by the fact that we assume all the individual variances of the
features to be equal to one. In other terms, the assumption $\Var(\bfX_{1,j})=1$ in (\ref{eq:model}) implies that $\phi_j^*\le 1$.
Making the change of variable $\bv=(1/\phi_j)_{j \in [p]}$, the optimization problem of Eq.\ (\ref{eq:phiPML}) becomes convex with
the feasible set $\bv\in[1+\infty)^p$ and the objective function:
\begin{equation}\label{eq:f}
f(\bv)=\sum_{j=1}^p \big\{ -\log(v_j) + (\bfS_n \hat\bfB)_{j,j} v_j \big\} + \kappa
\sum_{i<j \atop \hat\bfB_{ji}\hat\bfB_{ij}>t} \frac{(\hat\bfB_{ji} v_i - \hat\bfB_{ij} v_j)^2}{\hat\bfB_{ij}^2 +
\hat\bfB_{ji}^2}
\end{equation}
Furthermore, if we restrict the feasible set to $\bv \in \mathcal{V} = [1,n^{1/2}]^p$, the problem becomes strongly convex.
In addition, on this restricted feasible set the gradient of the objective function is Lipschitz-continuous.

It is possible to use the standard steepest gradient descent algorithm with a fixed step-size for efficiently approximating
the solution $\bphiPML$. Indeed, in the optimization problem \eqref{eq:f}, if $\nabla f$ is Lipschitz-continuous with constant $L<\infty$
and strongly convex with constant $l>0$, the gradient descent algorithm with a constant step-size $t = 2/( l + L)$ converges at a linear
rate (see \cite{Nesterov2004} for a detailed proof). Note that the convergence rate depends on ${L}/{l}$ which is an upper bound on the
condition number of the Hessian matrix $\nabla^2 f(\bv)$; this ratio should not be too high for the algorithm to converge fast.
Unfortunately, the values of $l$ and $L$ that we manage to obtain in our problem are far too loose. That is why we resort to a steepest
descent algorithm with adaptive step-size and scaled descent direction $- \nabla f(\bv_h) / \| \nabla f(\bv_h) \|_2$.
More details on the implementation are provided in Section~\ref{ssec:impl}.

\section{Experimental evaluation}

In this section, we describe the experimental set-up and report the results of the numerical experiments performed
on synthetic data-sets. We also provide detailed explanation of the implementation used for the symmetry-enforced and the
penalized maximum-likelihood estimators. A companion R package called \texttt{DESP} (for estimation of Diagonal Elements of Sparse
Precision-matrices) is created  and uploaded on CRAN\footnote{\url{http://cran.r-project.org/web/packages/DESP/index.html}}.

\begin{table}[ht]
\begin{tabular}{|*{10}{c|}}
\toprule
$p$ & \multicolumn{3}{c|}{30} & \multicolumn{3}{c|}{60} & \multicolumn{3}{c|}{90}\\
\midrule
$n$ & 200 & 800 & 2000 & 200 & 800 & 2000 & 200 & 800 & 2000 \\
\midrule
\multicolumn{10}{|c|}{\bf $\bfB^*$ estimated by square-root Lasso} \\
\midrule
RV & {\bf 0.400 } & {\bf 0.125} & {\bf 0.070} & {\bf 0.632} & {\bf 0.174} & {\bf 0.094} & {\bf 0.821 } & {\bf 0.215} & {\bf 0.113} \\
 & {\bf (.059)} & {\bf (.020)} & {\bf(.009)} & {\bf (.047)} & {\bf (.023)} & {\bf(.011)} & {\bf (.051)} & {\bf  (.020)} & {\bf  (.012)} \\
\midrule
RML & 1.048 & 0.508  & 0.320  & 1.644 & 0.780 & 0.491  & 2.120  & 0.997  & 0.626  \\
 & (.061) & (.020) & (.015) & (.048) &(.023) & (.014) & (.053) & (.023) & (.014) \\
\midrule
SML & 1.334  & 0.539 & 0.340  & 2.246  & 0.824  & 0.520 & 3.243  & 1.047& 0.653 \\
 &  (.221) & (.028) &(.018) & (.277) & (.039) & (.023) & (.516) & (.034) & (.020) \\
\midrule
PML & 1.130  & 0.530 & 0.333 & 1.790 & 0.813 & 0.508 & 2.311 & 1.036 & 0.645  \\
 & (.068) & (.020) & (.016) &(.049) & (.026) & (.016) & (.054) & (.024) & (.015) \\
\midrule
\multicolumn{10}{|c|}{\bf $\bfB^*$ estimated by square-root Lasso followed by OLS} \\
\midrule
RV & {\bf 0.247 } & 0.101  & 0.065  & {\bf 0.322 } & 0.129 & 0.081  & {\bf 0.381} & 0.150 & 0.095  \\
 & {\bf (.053)} & (.015) & (.009) & {\bf (.057)} &  (.019) &  (.007) & {\bf (.061)} &  (.017) &  (.009) \\
\midrule
RML & {\bf 0.247} & 0.101  & 0.065  & {\bf 0.322 } & 0.129  & 0.081 & {\bf 0.381 } & 0.150  & 0.095 \\
 & {\bf  (.053)} &  (.015) & (.009) & {\bf  (.057)} & (.019) & (.007) & {\bf  (.061)} & (.017) & (.009) \\
\midrule
SML & 0.329  & {\bf 0.096} & 0.065  & 0.622  & 0.129  & {\bf 0.076 } & 0.882 & 0.147 & 0.090  \\
 & (.107) & {\bf (.016)} &(.010) &  (.299) & (.021) & {\bf (.010)} &  (.501) &  (.020) & (.011) \\
\midrule
PML & {\bf 0.247 } & 0.098  & {\bf 0.064 } & 0.337  & {\bf 0.125 } & 0.077  & 0.441  & {\bf 0.142 } & {\bf 0.089} \\
 & {\bf (.068)} & (.017) & {\bf (.011)} & (.075) & {\bf (.021)} & (.009) & (.101) & {\bf (.017)} & {\bf (.011)} \\
\midrule
\multicolumn{10}{|c|}{\bf $\bfB^*$ is estimated without error } \\
\midrule
RV & 0.204& 0.101 & 0.065 & 0.258 & 0.129  & 0.081  & 0.300  & 0.149 & 0.095 \\
 & (.032) & (.015) &(.008) & (.033) & (.019) &(.007) &(.030) &(.015) & (.009) \\
\midrule
RML & 0.280  & 0.136  & 0.086  & 0.354 & 0.177  & 0.113  & 0.429 & 0.214 & 0.135  \\
RML & (.038) & (.017) & (.011) &(.032) &  (.019) &(.010) &(.038) & (.021) & (.012) \\
\midrule
SML & {\bf 0.033 } & {\bf 0.012 } & {\bf 0.008 } & {\bf 0.024 } & {\bf 0.012 } & {\bf 0.008 } & {\bf 0.027 } & {\bf 0.011 } & {\bf 0.007 } \\
SML & {\bf(.022)} & {\bf (.008)} & {\bf (.007)} & {\bf (.017)} & {\bf (.009)} & {\bf (.006)} & {\bf (.019)} & {\bf (.008)} & {\bf (.006)} \\
\midrule
PML & 0.065 & 0.027 ( & 0.019  & 0.065  & 0.031  & 0.021  & 0.073  & 0.035  & 0.023  \\
PML & (.021) &(.011) & (.006) & (.020) &(.009) & (.006) & (.022) & (.010) & (.006) \\
\bottomrule
\end{tabular}
\caption{Performance of the estimators of diagonal elements of the precision matrix in Model 2. The number of replications in each case
is $R=50$. More details on the experimental set-up are presented in Section~\ref{ssec:exp}.}
\end{table}

\subsection{Experiments on synthetic datasets}\label{ssec:exp}

We conducted a comprehensive experimental evaluation of the accuracy of different estimates of diagonal elements of the
precision matrix. In order to cover as many situations as possible, we used in experiments our six different forms of precision matrices
along with various values for $n$ and $p$. In each configuration, we considered several methods of estimating the matrix
$\bfB^*$.

Let us first describe  in a precise manner the precision matrices used in our experiments. It is worthwhile to underline here that all the precision
matrices are normalized in such a way that all the diagonal entries of the corresponding covariance matrix
$\bfSigma^* = (\bfOmega^*)^{-1}$ are equal to one. To this end, we first define a $p\times p$ positive semidefinite matrix $\bfA$
and then set $\bfOmega^* = (\diag(\bfA^{-1}))^{\frac1{2}} \bfA (\diag(\bfA^{-1}))^{\frac1{2}}$. The matrices $\bfA$ used in the six models
for which the experiments are carried out are defined as follows.

\begin{description}\itemsep=4pt
\item[{\bf Model 1:}] $\bfA$ is a Toeplitz matrix with the entries $\bfA_{ij} = 0.6^{|i-j|}$ for any $i,j \in [p]$.

 \item[{\bf Model 2:}] We start by defining a $p\times p$ pentadiagonal matrix with the entries
 \begin{equation*}
 \bar\bfA_{ij} = \left\lbrace \begin{array}{cl}
                                                       1 &, \text{ for } |i-j| = 0, \\
                                                       -1/3 &, \text{ for } |i-j| = 1, \\
                                                       -1/10 &, \text{ for } |i-j| = 2, \\
                                                       0 &, \text{ otherwise}. \\
                                                      \end{array}
 \right.
 \end{equation*}
Then, we denote by $\bfA$ the matrix with the entries $\bfA_{ij} = (\bar\bfA^{-1})_{ij} \1(|i-j|\le 2)$. One can check that the matrix $\bfA$
defined in such a way is positive semidefinite.

\item[{\bf Model 3:}] We set $\bfA_{ij} = 0$ for all the off-diagonal entries that are neither on the first row nor on the first
column of $\bfA$. The diagonal entries of $\bfA$ are
\begin{equation*}
\bfA_{11}  = p,\qquad \bfA_{ii} = 2,\quad \text{for any}\quad i \in \{2,\ldots,p\},
\end{equation*}
whereas the off-diagonal entries located either on the first row or on the first column are $\bfA_{1i} = \bfA_{i1} = \sqrt{2}$ for $i \in \{2,\ldots,p\}$.
\item[{\bf Model 4:}] We introduce the integer $k = \lceil \sqrt{p} \rceil$ and define a sparse $k\times k$ matrix
$\bar\bfA$ so that its only non-zero elements are  $\bar\bfA_{11} = k$ and, for any $i \in [2;k]$, $\bar\bfA_{ii} = 2k$ and $\bar\bfA_{1i} = \bar\bfA_{i1} = \sqrt{2}$. Then, we set
     \begin{equation*}
     \bfA = \left(  \begin{array}{cc}
										\bar\bfA & 0 \\
										0        & \bfI_{p-k}
										\end{array}
						\right).
		\end{equation*}
 \item[{\bf Model 5:}] We introduce $k = \lceil \sqrt{p} \rceil$ and define a sparse $k\times k$ matrix $\bar\bfA$ so that its only non-zero elements are $\bar\bfA_{11} = 50$ and, for any $i \in [2;k]$, $\bar\bfA_{ii} = 5$  and $\bar\bfA_{1i} = \bar\bfA_{i1} = 5/2$.
Then, similarly to previous model, we set
     \begin{equation*}
     \bfA = \left(  \begin{array}{cc}
										\bar\bfA & 0 \\
										0        & \bfI_{p-k}
										\end{array}
						\right).
		\end{equation*}
 \item[{\bf Model 6:}] We set $k = 6$, $p' = k\lceil {p}/{k} \rceil$ and define the $k\times k$ matrix $\bar\bfA$ as in model 5 above. Then,
we build the $p'\times p'$ block-diagonal matrix $\bfA$  by
 \begin{equation*}
 \bfA = \underbrace{\left( {\renewcommand{\arraystretch}{0}
                                         \setlength{\arraycolsep}{2pt}
                                        \begin{array}{ccc}
                                    \bar\bfA & & 0 \\
                                    & \ddots &  \\
                                    0 & & \bar\bfA \\
                                        \end{array}}
                                 \right)}_{\lceil {p}/{k} \rceil -\text{times}}.
\end{equation*}
Note that, in general, the resulting precision matrix in this model is not of size $p\times p$ but of size $p'\times p'$ with
$p'= 6 \lceil {p}/{6} \rceil$. However, since in the experiments reported in this section $p$ is always a multiple of $6$, we have $p=p'$.
\end{description}

In this experimental evaluation, we compare the performance of the following four estimators---introduced in previous sections---of the diagonal elements of the precision matrix:
\begin{itemize}
\item[$\bullet$] RV corresponds to the residual variance estimator defined in Section~\ref{sect:rv}.
\item[$\bullet$] RML corresponds to the relaxed maximum likelihood estimator described by equation (\ref{RML}).
\item[$\bullet$] SML corresponds to the symmetry-enforced maximum likelihood estimator described in Algorithm~\ref{algo:SPT}.
\item[$\bullet$] PML corresponds to the penalized maximum likelihood estimator described by equation (\ref{eq:phiPML}).
\end{itemize}

Note that all these algorithms need an estimator of the matrix $\bfB^*$ to produce an estimator of the diagonal
entries of the precision matrix. We conducted experiments in three different scenarios. The first scenario is when
the matrix $\bfB^*$ is estimated column-by-column by the square-root Lasso, using the penalization parameter
$\lambda = \sqrt{2 \log p}$. This value for $\lambda$ is commonly called the universal choice and
has proved to lead to optimal theoretical results and fairly good empirical results \citep{DalalyanC12,SunZhang12,DHMS13}.
The second scenario is when the
matrix $\bfB^*$ is estimated column-by-column by the ordinary least squares estimator applied to the covariates that correspond
to nonzero entries of the square-root Lasso estimator\footnote{A discussion on the strengths and weaknesses of this estimator
can be found in \citep{BelloniChernozhukov,Lederer2014}.} with the aforementioned value of $\lambda$. Finally, the third scenario is an unrealistic one; it corresponds to the case of a known matrix $\bfB^*$. This scenario is included in the experimental evaluation
in order to check the consistency between the theoretical and the empirical results as well as in order to better understand how
the error in estimating $\bfB^*$ impacts the quality of estimation of the diagonal entries of the precision matrix.

Thus, each configuration of our empirical study corresponds to choosing
\begin{itemize}
\item[$\bullet$] a model out of 6 models described above
\item[$\bullet$] a dimension $p\in\{30,60,90\}$
\item[$\bullet$] a sample size $n\in\{200,800,2000\}$
\item[$\bullet$] a method of estimating $\bfB^*$.
\end{itemize}
In each configuration, we computed the estimators RV, RML, SML and PML for 50 independent datasets. Using these $R=50$ replications,
we estimate the expected risk of estimating  $\bphi^*$, $\Ex (\| \bphi^* - \hat\bphi \|_2)$, by the average
$\frac1{R} \sum_{r=1}^R \| \bphi^* - \hat\bphi_{(r)} \|_2$.
In Tables 1-6, we report these averages along with the standard deviations of the errors measured by $\ell_2$-vector norm.
All the experiments were conducted in R, using the Mosek solver (see \cite{MOSEK}) for computing the square-root Lasso estimator
by second-order cone programming.

\begin{table}[ht]
\begin{tabular}{|*{10}{c|}}
\toprule
$p$ & \multicolumn{3}{c|}{30} & \multicolumn{3}{c|}{60} & \multicolumn{3}{c|}{90}\\
\midrule
$n$ & 200 & 800 & 2000 & 200 & 800 & 2000 & 200 & 800 & 2000 \\
\midrule
\multicolumn{10}{|c|}{\bf $\bfB^*$ estimated by square-root Lasso} \\
\midrule
RV & {\bf 0.273 } & {\bf 0.138 } & {\bf 0.084 } & {\bf 0.402} & {\bf 0.194} & {\bf 0.123 } & {\bf 0.524 } & {\bf 0.243 } & {\bf 0.150 } \\
& {\bf (.042)} & {\bf (.016)} & {\bf (.010)} & {\bf (.036)} & {\bf (.014)} & {\bf (.012)} & {\bf (.037)} & {\bf  (.017)} & {\bf (.011)} \\
\midrule
RML & 0.509 & 0.272  & 0.173 & 0.722  & 0.395 & 0.261 & 0.880  & 0.496 & 0.321 \\
&  (.062) & (.022) & (.018) & (.061) & (.026) & (.013) & (.069) &  (.028) &(.014) \\
\midrule
SML & 1.080 & 0.678  & 0.375  & 1.276 & 0.802 & 0.641  & 1.235  & 0.651  & 0.454  \\
& (.132) &  (.095) &  (.045) &  (.146) &  (.075) &  (.050) &  (.137) & (.052) &(.029) \\
\midrule
PML & 0.509 & 0.272  & 0.173 & 0.722  & 0.395  & 0.261 & 0.880  & 0.496  & 0.322  \\
 &  (.062) & (.021) & (.017) & (.061) & (.026) &  (.013) & (.069) & (.028) & (.014) \\
\midrule
\multicolumn{10}{|c|}{\bf $\bfB^*$ estimated by square-root Lasso followed by OLS} \\
\midrule
RV & {\bf 0.792 } & {\bf 0.144 } & {\bf 0.084 } & {\bf 2.251} & {\bf 1.857 } & {\bf 0.943 } & {\bf 3.261 } & {\bf 3.815 } & {\bf 3.689 } \\
 & {\bf (.192)} & {\bf(.051)} & {\bf  (.010)} & {\bf  (.203)} & {\bf (.161)} & {\bf  (.221)} & {\bf (.184)} & {\bf (.157)} & {\bf  (.120)} \\
\midrule
RML & {\bf 0.792} & {\bf 0.144} & {\bf 0.084} & {\bf 2.251} & {\bf 1.857 } & {\bf 0.943 } & {\bf 3.261 } & {\bf 3.815 } & {\bf 3.689 } \\
 & {\bf (.192)} & {\bf(.051)} & {\bf (.010)} & {\bf(.203)} & {\bf (.161)} & {\bf  (.221)} & {\bf  (.184)} & {\bf (.157)} & {\bf (.120)} \\
\midrule
SML & 1.211  & 0.610 & 0.336  & 2.515  & 1.956  & 1.095 & 3.415  & 3.832  & 3.700  \\
 & (.131) &(.106) & (.057) & (.189) &  (.143) & (.194) & (.175) &  (.152) & (.118) \\
\midrule
PML & 0.879  & 0.150  & {\bf 0.084 } & 2.366  & {\bf 1.857 } & {\bf 0.943 } & 3.342  & 3.816  & {\bf 3.689 } \\
 & (.175) &  (.051) & {\bf (.011)} & (.207) & {\bf (.160)} & {\bf  (.221)} & (.176) & (.157) & {\bf (.120)} \\
\midrule
\multicolumn{10}{|c|}{\bf $\bfB^*$ is estimated without error } \\
\midrule
RV & 0.267 & 0.138 & 0.084  & 0.380 & 0.192  & 0.122  & 0.476 & 0.237  & 0.148 \\
 & (.041) & (.016) & (.010) & (.036) & (.014) & (.011) &(.033) &(.018) & (.011) \\
\midrule
RML & 0.330  & 0.163  & 0.104 & 0.469  & 0.229 & 0.151  & 0.584  & 0.289 & 0.178 \\
 & (.046) & (.016) & (.013) & (.044) & (.023) &  (.013) & (.048) & (.020) &(.014) \\
\midrule
SML & {\bf 0.042 } & {\bf 0.019 } & {\bf 0.012} & {\bf 0.044 } & {\bf 0.021} & {\bf 0.011 } & {\bf 0.048} & {\bf 0.021} & {\bf 0.011 } \\
 & {\bf (.035)} & {\bf (.013)} & {\bf (.009)} & {\bf (.033)} & {\bf(.015)} & {\bf (.007)} & {\bf (.041)} & {\bf  (.017)} & {\bf (.010)} \\
\midrule
PML & 0.330& 0.163 & 0.104  & 0.470 & 0.229 & 0.151  & 0.584  & 0.289 & 0.178 \\
 & (.046) &  (.016) &  (.013) & (.044) & (.023) & (.012) &  (.048) & (.020) & (.014) \\
\bottomrule
\end{tabular}
\caption{Performance of the estimators of diagonal elements of the precision matrix in Model 3. The number of replications in each case
is $R=50$. More details on the experimental set-up are presented in Section~\ref{ssec:exp}.}
\end{table}

\begin{table}[ht]
\begin{tabular}{|*{10}{c|}}
\toprule
$p$ & \multicolumn{3}{c|}{30} & \multicolumn{3}{c|}{60} & \multicolumn{3}{c|}{90}\\
\midrule
$n$ & 200 & 800 & 2000 & 200 & 800 & 2000 & 200 & 800 & 2000 \\
\midrule
\multicolumn{10}{|c|}{\bf $\bfB^*$ estimated by square-root Lasso} \\
\midrule
RV & {\bf 0.372} & {\bf 0.184} & {\bf 0.113} & {\bf 0.526 } & {\bf 0.269 } & {\bf 0.161 } & {\bf 0.655 } & {\bf 0.327} & {\bf 0.206 } \\
 & {\bf (.066)} & {\bf (.036)} & {\bf(.023)} & {\bf (.066)} & {\bf (.035)} & {\bf(.024)} & {\bf (.076)} & {\bf(.046)} & {\bf (.025)} \\
\midrule
RML & 0.419  & 0.212 & 0.134  & 0.583  & 0.301  & 0.183  & 0.722  & 0.361  & 0.228  \\
 & (.067) & (.033) & (.020) & (.065) & (.033) & (.023) & (.074) & (.045) & (.024) \\
\midrule
SML & 0.468 & 0.228 & 0.144  & 0.664  & 0.334  & 0.201  & 0.843 & 0.405 & 0.252\\
& (.076) & (.033) &  (.020) &  (.079) &  (.032) &  (.024) &  (.095) & (.042) & (.024) \\
\midrule
PML & 0.450  & 0.224  & 0.142 & 0.622 & 0.326 & 0.198 & 0.763  & 0.394  & 0.247 \\
 & (.070) & (.032) &(.020) &(.069) & (.032) &(.024) &(.073) & (.042) & (.023) \\
\midrule
\multicolumn{10}{|c|}{\bf $\bfB^*$ estimated by square-root Lasso followed by OLS} \\
\midrule
RV & {\bf 0.368} & {\bf 0.182} & {\bf 0.113 } & {\bf 0.516} & {\bf 0.267} & {\bf 0.160} & {\bf 0.641} & {\bf 0.324 } & {\bf 0.205 } \\
 & {\bf (.065)} & {\bf (.036)} & {\bf(.023)} & {\bf (.064)} & {\bf (.035)} & {\bf (.024)} & {\bf (.075)} & {\bf(.046)} & {\bf(.025)} \\
\midrule
RML & {\bf 0.368} & {\bf 0.182} & {\bf 0.113} & {\bf 0.516)} & {\bf 0.267} & {\bf 0.160 } & {\bf 0.641 } & {\bf 0.324 } & {\bf 0.205 } \\
& {\bf (.065)} & {\bf  (.036)} & {\bf (.023)} & {\bf (.064)} & {\bf(.035)} & {\bf (.024)} & {\bf (.075)} & {\bf (.046)} & {\bf (.025)} \\
\midrule
SML & 0.392 & 0.191  & 0.118  & 0.558 & 0.286 & 0.173  & 0.712  & 0.351 & 0.220 \\
& (.069) &  (.037) &  (.025) & (.078) &(.033) & (.025) & (.084) & (.043) & (.025) \\
\midrule
PML & 0.383  & 0.188  & 0.116  & 0.539  & 0.280  & 0.169  & 0.680 & 0.343 & 0.215  \\
 &  (.067) & (.037) &(.024) &  (.067) &  (.033) &  (.024) &  (.077) & (.043) &  (.025) \\
\midrule
\multicolumn{10}{|c|}{\bf $\bfB^*$ is estimated without error } \\
\midrule
RV & 0.366 & 0.182  & 0.113 & 0.515  & 0.267  & 0.160  & 0.640  & 0.324  & 0.204  \\
 &  (.066) &(.036) & (.023) &(.065) & (.035) & (.024) & (.074) &  (.046) & (.025) \\
\midrule
RML & 0.374 & 0.187 & 0.116 & 0.524 & 0.271  & 0.163  & 0.649 & 0.330  & 0.208  \\
& (.065) & (.035) & (.023) &(.066) &(.035) & (.024) &  (.073) &  (.046) & (.025) \\
\midrule
SML & {\bf 0.352 } & {\bf 0.173 } & {\bf 0.108} & {\bf 0.500 } & {\bf 0.259 } & {\bf 0.156} & {\bf 0.624 } & {\bf 0.316 } & {\bf 0.199} \\
SML & {\bf(.065)} & {\bf (.039)} & {\bf (.024)} & {\bf (.066)} & {\bf (.036)} & {\bf (.025)} & {\bf  (.074)} & {\bf (.046)} & {\bf(.025)} \\
\midrule
PML & 0.353  & 0.174  & 0.109  & {\bf 0.500 } & {\bf 0.259 } & {\bf 0.156 } & 0.625  & 0.317  & 0.200 \\
 &  (.065) & (.039) &(.024) & {\bf (.066)} & {\bf  (.036)} & {\bf (.025)} &  (.074) & (.046) & (.025) \\
\bottomrule
\end{tabular}
 \caption{Performance of the estimators of diagonal elements of the precision matrix in Model 4. The number of replications in each case
is $R=50$. More details on the experimental set-up are presented in Section~\ref{ssec:exp}.}
\end{table}

In the ideal case when $\bfB^*$ is estimated without error (by itself), the empirical results reflect perfectly the theoretical
results of the previous sections. The comparison of the performance of the estimators indicates that the maximum likelihood estimators
SML and PML are preferable to the residual variance estimator. The maximum likelihood estimator considering symmetry constraints
outperforms all the other estimators. However, in practice when $\hat\bfB$ is obtained by the square-root Lasso without any
refinement, $\bphiRV$ outperforms all the other estimators in the vast majority of configurations. Some exceptions can be observed
in Models 5 and 6 (see the top part of Tables~\ref{tab:5} and \ref{tab:6},  where RV is slightly worse than the other procedures
for small sample sizes ($n=200$). It should be, however, acknowledged that the difference of the quality between the estimators in these
cases is not large enough to advocate for using RML, SML or PML. Note also that the RV estimator
satisfies the following simple inequality:
\begin{align*}
(\phiRV_j-\phi_j^*)^2 
        &= \big(\frac1n \|\bfX\hat\bfB_{\bullet,j}\|_2^2 - \phi_j^*)^2\\
        &\le \frac2{n^2} \big(\|\bfX\hat\bfB_{\bullet,j}\|_2^2 - \|\bfX\bfB_{\bullet,j}^*\|_2^2)^2 + 
        2\big(\frac1n \|\bfX\bfB_{\bullet,j}^*\|_2^2 - \phi_j^*)^2.
\end{align*}
The second term of the right-hand side is the error evaluated theoretically in the previous sections, while the first term
can be further bounded from above by $2\big(\frac1n \{\|\bfX\hat\bfB_{\bullet,j}\|_2^2 \vee \|\bfX\bfB_{\bullet,j}^*\|_2^2\}\big)
\big(\frac1n\|\bfX(\hat\bfB_{\bullet,j}-\bfB^*_{\bullet,j})\|_{2}^2\big)$. This inequality partly explains the behavior of the RV-estimator in 
the reported numerical results. More importantly, it shows that the error of estimating the matrix $\bfB^*$ might have 
a strong impact on the quality of estimating the diagonal elements. 

It is interesting to observe what happens when an additional step of estimation of $\bfB^*$ using the ordinary least squares
on the sparsity pattern provided by the square-root Lasso is performed. The impact of this step is not the same in all the
models under consideration. In particular, the quality of estimation is mostly improved for all the four estimators in models 1 and 2.
Furthermore, thanks to this variable selection step, the maximum-likelihood-type estimators perform nearly as well as the residual
variance estimator RV. In model 3, the variable selection step deteriorates the quality of estimation in most configurations, whereas
in models 4-6 this step has almost no consequence on the estimation accuracy.

The graphics of Figure \ref{fig:globfig} are drawn for Model 2 with $p = 60$.  The left plot
corresponds to the estimation error---measured by $\ell_2$-vector norm---as a function of the sample size in the scenario
$\hat\bfB=\bfB^*$, whereas the central plot corresponds to the same error when $\bfB^*$ is estimated by the OLS on the sparsity
pattern furnished by the square-root Lasso. The right plot is just a zoom on the center plot.
These plots illustrate the convergence to zero of the error of estimation for the estimators considered in this paper. The
speed of convergence in these empirical results, as expected, is nearly $n^{-1/2}$ for fixed dimension $p$.

\begin{table}[ht]
\begin{tabular}{|*{10}{c|}}
\toprule
$p$ & \multicolumn{3}{c|}{30} & \multicolumn{3}{c|}{60} & \multicolumn{3}{c|}{90}\\
\midrule
$n$ & 200 & 800 & 2000 & 200 & 800 & 2000 & 200 & 800 & 2000 \\
\midrule
\multicolumn{10}{|c|}{\bf $\bfB^*$ estimated by square-root Lasso} \\
\midrule
RV & 0.384  & {\bf 0.202} & {\bf 0.125 } & 0.543  & {\bf 0.279 } & {\bf 0.185 } & 0.701  & {\bf 0.342} & {\bf 0.222 } \\
 & (.077) & {\bf (.029)} & {\bf (.023)} &  (.060) & {\bf (.039)} & {\bf  (.024)} & (.064) & {\bf  (.040)} & {\bf (.021)} \\
\midrule
RML & {\bf 0.380} & 0.206 & 0.128 & {\bf 0.539 } & 0.287  & 0.190 & {\bf 0.697 } & 0.352  & 0.230  \\
 & {\bf (.076)} & (.027) & (.023) & {\bf (.060)} &(.040) & (.025) & {\bf (.064)} & (.041) & (.021) \\
\midrule
SML & {\bf 0.380} & 0.205 & 0.131 & {\bf 0.539 } & 0.290  & 0.194  & {\bf 0.697 } & 0.353  & 0.233  \\
 & {\bf (.076)} & (.029) &(.024) & {\bf(.060)} & (.042) & (.024) & {\bf  (.064)} & (.041) &  (.024) \\
\midrule
PML & {\bf 0.380} & 0.206 & 0.128  & {\bf 0.539} & 0.287  & 0.190  & {\bf 0.697 } & 0.352  & 0.230  \\
 & {\bf (.076)} & (.027) &(.023) & {\bf (.060)} & (.040) & (.025) & {\bf(.064)} & (.041) &  (.021) \\
\midrule
\multicolumn{10}{|c|}{\bf $\bfB^*$ estimated by square-root Lasso followed by OLS} \\
\midrule
RV & {\bf 0.379 } & {\bf 0.209 } & {\bf 0.130 } & {\bf 0.534 } & {\bf 0.295 } & {\bf 0.194} & {\bf 0.693 } & {\bf 0.367} & {\bf 0.235} \\
 & {\bf  (.076)} & {\bf (.029)} & {\bf (.025)} & {\bf (.061)} & {\bf (.040)} & {\bf (.027)} & {\bf (.064)} & {\bf(.044)} & {\bf  (.025)} \\
\midrule
RML & {\bf 0.379 } & {\bf 0.209 } & {\bf 0.130 } & {\bf 0.534 } & {\bf 0.295 } & {\bf 0.194} & {\bf 0.693 } & {\bf 0.367} & {\bf 0.235 } \\
 & {\bf (.076)} & {\bf(.029)} & {\bf(.025)} & {\bf (.061)} & {\bf (.040)} & {\bf(.027)} & {\bf (.064)} & {\bf (.044)} & {\bf (.025)} \\
\midrule
SML & {\bf 0.379} & {\bf 0.209} & 0.134 & {\bf 0.534} & 0.297 & 0.199  & {\bf 0.693 } & 0.368  & 0.241\\
 & {\bf (.076)} & {\bf (.031)} & (.026) & {\bf (.061)} & (.041) & (.027) & {\bf  (.064)} &(.043) & (.027) \\
\midrule
PML & {\bf 0.379 } & {\bf 0.209 } & {\bf 0.130 } & {\bf 0.534 } & {\bf 0.295 } & {\bf 0.194 } & {\bf 0.693 } & {\bf 0.367 } & 0.236 \\
 & {\bf  (.076)} & {\bf (.029)} & {\bf  (.025)} & {\bf  (.061)} & {\bf (.040)} & {\bf  (.027)} & {\bf (.063)} & {\bf (.043)} & (.025) \\
\midrule
\multicolumn{10}{|c|}{\bf $\bfB^*$ is estimated without error } \\
\midrule
RV & 0.384  & 0.201  & 0.125  & 0.530  & 0.275  & 0.184  & 0.686  & 0.339 & 0.221  \\
 & (.075) & (.030) &  (.022) & (.060) &  (.038) & (.023) & (.066) & (.040) & (.022) \\
\midrule
RML & 0.383  & 0.201  & 0.126 & 0.531 & 0.277 & 0.184  & 0.687  & 0.339  & 0.221  \\
 &(.076) & (.029) &(.022) & (.061) & (.037) &  (.023) &  (.066) &  (.040) & (.022) \\
\midrule
SML & {\bf 0.347 } & {\bf 0.180 } & {\bf 0.112} & {\bf 0.498 } & {\bf 0.257 } & {\bf 0.170 } & {\bf 0.647} & {\bf 0.319 } & {\bf 0.206 } \\
 & {\bf (.078)} & {\bf (.032)} & {\bf (.024)} & {\bf  (.061)} & {\bf (.042)} & {\bf (.025)} & {\bf (.067)} & {\bf  (.042)} & {\bf(.023)} \\
\midrule
PML & 0.383 & 0.201  & 0.126& 0.531 & 0.277& 0.184  & 0.687  & 0.339  & 0.221  \\
 &  (.076) &  (.029) & (.022) & (.061) & (.037) & (.023) &  (.066) & (.040) &(.022) \\
\bottomrule
\end{tabular}
 \caption{Performance of the estimators of diagonal elements of the precision matrix in Model 5. The number of replications in each case
is $R=50$. More details on the experimental set-up are presented in Section~\ref{ssec:exp}.}
\label{tab:5}
\end{table}

\begin{table}[ht]
\begin{tabular}{|*{10}{c|}}
\toprule
$p$ & \multicolumn{3}{c|}{30} & \multicolumn{3}{c|}{60} & \multicolumn{3}{c|}{90}\\
\midrule
$n$ & 200 & 800 & 2000 & 200 & 800 & 2000 & 200 & 800 & 2000 \\
\midrule
\multicolumn{10}{|c|}{\bf $\bfB^*$ estimated by square-root Lasso} \\
\midrule
RV & 0.383  & {\bf 0.207 } & {\bf 0.140 } & 0.534  & {\bf 0.310 } & {\bf 0.205 } & 0.651  & {\bf 0.374 } & {\bf 0.255 } \\
 &  (.059) & {\bf (.031)} & {\bf(.018)} &  (.054) & {\bf (.031)} & {\bf (.017)} &  (.057) & {\bf  (.034)} & {\bf (.018)} \\
\midrule
RML & {\bf 0.378 } & 0.223  & 0.157  & {\bf 0.531 } & 0.335  & 0.236  & {\bf 0.648 } & 0.408  & 0.299  \\
 & {\bf (.058)} & (.030) & (.020) & {\bf(.052)} &(.033) & (.020) & {\bf (.055)} & (.036) & (.019) \\
\midrule
SML & {\bf 0.378} & 0.229  & 0.169  & {\bf 0.531} & 0.339  & 0.249  & 0.649  & 0.410  & 0.312 \\
 & {\bf (.058)} & (.030) &(.022) & {\bf(.052)} & (.036) & (.021) & (.055) &(.036) &  (.019) \\
\midrule
PML & {\bf 0.378 } & 0.223  & 0.157  & {\bf 0.531} & 0.335  & 0.236  & {\bf 0.648 } & 0.408  & 0.299 \\
 & {\bf (.058)} & (.030) &(.020) & {\bf (.052)} & (.033) & (.020) & {\bf(.055)} & (.036) & (.019) \\
\midrule
\multicolumn{10}{|c|}{\bf $\bfB^*$ estimated by square-root Lasso followed by OLS} \\
\midrule
RV & {\bf 0.383} & {\bf 0.245} & {\bf 0.170} & {\bf 0.534} & {\bf 0.373 } & {\bf 0.262 } & {\bf 0.649} & {\bf 0.453 } & {\bf 0.341} \\
& {\bf (.058)} & {\bf (.030)} & {\bf (.019)} & {\bf(.053)} & {\bf (.030)} & {\bf  (.024)} & {\bf (.053)} & {\bf (.033)} & {\bf (.025)} \\
\midrule
RML & {\bf 0.383} & {\bf 0.245} & {\bf 0.170} & {\bf 0.534} & {\bf 0.373} & {\bf 0.262} & {\bf 0.649} & {\bf 0.453} & {\bf 0.341 } \\
 & {\bf (.058)} & {\bf (.030)} & {\bf  (.019)} & {\bf (.053)} & {\bf (.030)} & {\bf (.024)} & {\bf (.053)} & {\bf (.033)} & {\bf(.025)} \\
\midrule
SML & {\bf 0.383 } & 0.251  & 0.186  & {\bf 0.534} & 0.375 & 0.281  & {\bf 0.649} & 0.454 & 0.357  \\
 & {\bf (.058)} &  (.027) & (.022) & {\bf  (.053)} & (.030) &  (.023) & {\bf (.053)} & (.033) &  (.026) \\
\midrule
PML & 0.385 & {\bf 0.245} & {\bf 0.170} & {\bf 0.534} & {\bf 0.373} & {\bf 0.262 } & 0.650  & {\bf 0.453 } & {\bf 0.341 } \\
 &  (.057) & {\bf (.029)} & {\bf (.019)} & {\bf  (.053)} & {\bf  (.030)} & {\bf (.024)} &  (.053) & {\bf (.033)} & {\bf (.025)} \\
\midrule
\multicolumn{10}{|c|}{\bf $\bfB^*$ is estimated without error } \\
\midrule
RV & 0.408 & 0.210 & 0.141 & 0.569 & 0.309& 0.205 & 0.697 & 0.370  & 0.251  \\
 & (.068) & (.030) &  (.018) &  (.068) &  (.031) & (.018) &  (.063) &  (.030) & (.018) \\
\midrule
RML & 0.411 & 0.212 & 0.142 & 0.578 & 0.313  & 0.208  & 0.702  & 0.372  & 0.254 \\
 & (.070) &  (.030) & (.019) &(.067) & (.033) &  (.018) &  (.064) & (.031) & (.018) \\
\midrule
SML & {\bf 0.182} & {\bf 0.097} & {\bf 0.061} & {\bf 0.277} & {\bf 0.142} & {\bf 0.094} & {\bf 0.311} & {\bf 0.178 } & {\bf 0.110} \\
 & {\bf (.057)} & {\bf (.023)} & {\bf (.020)} & {\bf (.064)} & {\bf (.033)} & {\bf (.022)} & {\bf (.073)} & {\bf (.030)} & {\bf(.019)} \\
\midrule
PML & 0.411 & 0.212 & 0.142 & 0.578 & 0.313 & 0.208 & 0.702  & 0.372  & 0.254 \\
 &  (.070) &  (.030) &  (.019) & (.067) &  (.033) &  (.018) &  (.064) & (.031) & (.018) \\
\bottomrule
\end{tabular}
 \caption{Performance of the estimators of diagonal elements of the precision matrix in Model 6. The number of replications in each case
is $R=50$. More details on the experimental set-up are presented in Section~\ref{ssec:exp}.}
\label{tab:6}
\end{table}

\subsection{Details on the implementation}\label{ssec:impl}

\paragraph{Symmetry-enforced maximum likelihood.}

As we explained earlier, the product structure of the term $\bfDelta^*_j$ in (\ref{delta}) may cause the amplification of the
estimation error when passing from $\hat\bfB$ to $\hat\bphi$. In order to reduce as much as possible this phenomenon, we suggested
to choose the path $\mathscr C$ by minimizing its length. In addition, the fact that some entries of $\bfB^*$ appear in the
denominator of $\bfDelta^*_j$, make it unsuitable to include in $\mathscr C$ edges corresponding to small values of
$\hat\bfB_{ij}$. The combination of these two arguments suggests to define edge weights as decreasing functions of
$\hat\bfB_{ij}$ and to look for paths that somehow minimize the overall weight defined as the sum of the weights of the
edges contained in $\mathscr C$.

The two versions of the SML algorithm that have been implemented and tested in this work make use of the minimum spanning tree (MST)
and the shortest path tree in the step of determining the way of computation the elements of $\hat\bphi$ belonging to a connected
component $\mathscr C$ of the graph $\hat{\mathscr G}$. A MST of $\mathscr C$ is a tree that spans $\mathscr C$
and has the smallest total weight among all the spanning trees of $\mathscr C$. The shortest path tree having a given node $r$ as a root
is a spanning tree $\mathscr T$ of $\mathscr C$ such that for any node $j\in\mathscr C$ the weight of the path from $j$ to $r$ in
$\mathscr T$ is the smallest among the weights of all possible paths from $j$ to $r$ in $\mathscr C$.

We have used the Kruskal \citep{Kruskal} algorithm  for finding the MST and the Jarnik-Prim-Dijkstra  algorithm
\citep{jarnik,Prim,Dijkstra} for the shortest path tree.  The worst-case computational complexities of the construction of these
trees are the following \citep{Cormen}. When the graph $\mathscr G$ has $p$ nodes and $q$ edges, the Kruskal algorithm runs in $O(q\log p)$ time.
Its output is a set of MSTs per connected component. The version of the SML based on the shortest path tree requires $O(p + q)$
operations to find the connected components. In a connected component having $p_c$ nodes and $q_c$ edges, the node of largest
degree can be obtained in $O(q_c)$ operations, while the computational complexity of finding the shortest paths from a node to
all the others is $O(q_c \log(p_c))$. Therefore, determining a shortest path tree per connected component has a complexity of
$O(p + q\log(p))$, or $O(sp\log(p))$ where $s$ is the maximal degree of a node of $\hat{\mathscr G}$. Thus, the computational
complexities of the two versions of the SML estimator are comparable and, at most, of the order $O(sp\log(p))$.

In our experiments, we have also tried\footnote{We used the package RBGL of R \citep{RBGL} for various algorithms related to weighted
graphs.} a third version consisting in computing the shortest path trees from every node of
a connected component and then choosing the one with the minimal overall weight, rather than first choosing the root as the
node having largest degree.  Several other variants have been tested as well, but the simplest version based on choosing the
MST has lead to the best empirical results.

\begin{figure}
\vspace{-25pt}
\input{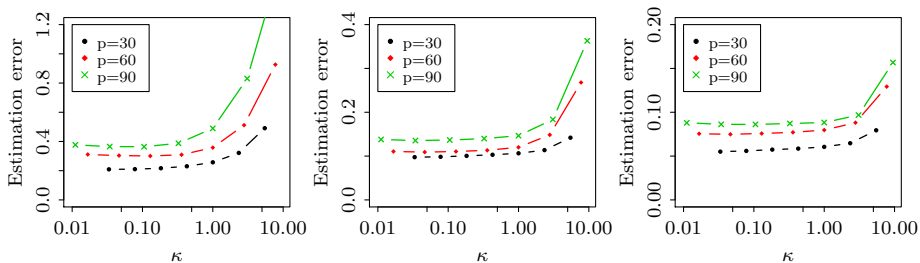}
\vspace{-24pt}
\caption{The estimation error of the PML as a function of $\kappa$. The plots are obtained for the synthetic experiment
of Model 2 with various values of $p$ and for $n=200$ (left), $n=800$ (middle) and $n=2000$. Please note that the limits of
the $y$-axis are not the same in the three plots and that the $x$-axis is presented in logarithmic scale.}
\label{fig:2}
\end{figure}

\paragraph{Penalized maximum likelihood.}

As mentioned earlier, the PML estimator is computed by solving the optimization problem \eqref{eq:f}.
We implement a steepest descent algorithm with adaptive step-size and scaled descent direction $- \nabla f(\bv_h) / \| \nabla f(\bv_h) \|_2$.
At each iteration, one common adaptation for every coordinate of the descent direction is performed.
If the objective function increases, the current iteration is done again with a halved step-size.
On the opposite, if the objective function decreases, the step-size is increased by a constant factor
for the next iteration.

Mathematically speaking, the update operations for our gradient descent algorithm are
\begin{equation}
 \bv_0 = \bfone , \quad \bv_{h+1} = \bv_h + t_h \bu_h ,\qquad h=0,1,2,\ldots,
\end{equation}
where the descent direction is $\bu_h = - \nabla f(\bv_h)/ \| \nabla f(\bv_h) \|_2$ and $t_h$ is the step-size.
Thanks to the convexity, the convergence of this algorithm is guaranteed for any starting point $\bv_0$.
The step-size is updated at each iteration according to the following rule:
\begin{align*}
t_{h+1} = \left\lbrace \begin{array}{ll}
                  1.2 \times t_h , & \qquad \text{for } f(\bv_{h+1}) < f(\bv_h) , \\
                  0.5 \times t_h, &\qquad \text{otherwise.}
                 \end{array}
	  \right.
\end{align*}
The multiplicative factors we use for adaptive step-size are those propose by \cite{Riedmiller92rprop} for the Rprop algorithm.
We stop iterating when the gradient magnitude measured in the $\ell_2$-norm is below a certain level ($10^{-5}$ in our experiments)
or when the limit of 5000 iterations is attained.

For the choice of the tuning parameter $\kappa$, we did a cross-validation by choosing a geometric grid over the values of $\kappa$
ranging from $1/p$ to $\sqrt{p}$. The results, for Models 2 and 4, are plotted in Fig.~\ref{fig:2} and \ref{fig:3}, respectively.
We can clearly see that there is a large interval of values of $\kappa$ for which the error is nearly minimal. Based on this observation,
we chose $\kappa = \frac13\sqrt{\log p}$ for all the numerical experiments reported in Tables~\ref{tab:1}-\ref{tab:6}.

\section{Conclusion}

This paper introduces three estimators of the diagonal entries of a sparse precision matrix when $n$ iid copies of a Gaussian vector
with this precision matrix are observed. The properties of these estimators are discussed and compared with those of the commonly used
residual variance estimator. At a theoretical level, an interesting finding is that the naive maximum likelihood estimator (MLE) that does
not take into account the symmetry constraints has a significantly larger risk than the residual variance estimator and, hence, is not
optimal even asymptotically. The symmetry-enforced MLE and the penalized MLE circumvent this drawback and are shown in all numerical 
experiments to outperform the residual variance estimator when the matrix $\bfB^*$ is known. Similar but unreported results are obtained
when the estimators of the diagonal entries use a noisy matrix $\hat\bfB= \bfB^* + \bfXi$, provided the noise matrix $\bfXi$ has iid
Gaussian entries with zero mean and small variance. However, in a more realistic situation when $\bfB^*$ is estimated by the square-root
Lasso or by the ordinary least squares conducted over the submodel selected by the square-root Lasso, the accuracies of the four estimators 
of the diagonal entries become comparable with a slight advantage for the residual variance estimator. 

We would like also to mention the introduction of a novel and simple method of estimating partial correlations and of symmetrizing the precision matrix estimator derived from the nonsymmetric matrix $\hat\bfB$. It is based on the observation that the square of the partial correlation between $i$-th and $j$-th variables is equal to $\bfB^*_{ij}\bfB^*_{ji}$.

In the future, it would be interesting to look for an estimator of $\bfB^*$ which is more accurate than the square-root Lasso and could
hopefully---in combination with the symmetry-enforced MLE or the penalized MLE---lead to better precision matrix estimate than the
one obtained by the association of the square-root Lasso and the residual variance estimator. Another appealing avenue for future research
is the investigation of the case when the matrix $\bfX$ is observed with an error. Recent papers \citep{Rosenbaum1,BelloniRT}
may provide valuable guidance for accomplishing this task.

\begin{figure}
\vspace{-25pt}
\input{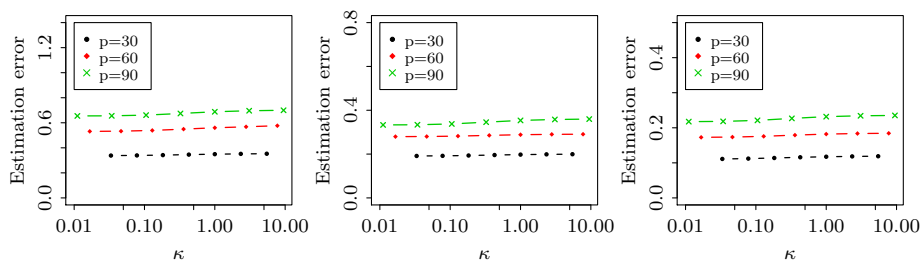}
\vspace{-24pt}
\caption{The estimation error of the PML as a function of $\kappa$. The plots are obtained for the synthetic experiment
of Model 4 with various values of $p$ and for $n=200$ (left), $n=800$ (middle) and $n=2000$. Please note that the limits of
the $y$-axis are not the same in the three plots and that the $x$-axis is presented in logarithmic scale.}
\label{fig:3}
\end{figure}

\section*{Acknowledgments} The work of the second author was partially supported by the grant Investissements d'Avenir (ANR-
11-IDEX-0003/Labex Ecodec/ANR-11-LABX-0047) and the chair ``LCL/GENES/Fondation du risque, Nouveaux enjeux pour nouvelles donn\'ees''.

\bibliographystyle{plainnat}
\bibliography{diagElementsBibliography}

\end{document}